\providecommand{\ZZ}{\mathbb{Z}}
\providecommand{\mdim}{\mathop{\rm mdim}\nolimits}
\providecommand{\edim}{\mathop{\rm edim}\nolimits}
\newtheorem{remark}{Remark}[section]
\newtheorem{lemma}[remark]{Lemma}
\newtheorem{theorem}[remark]{Theorem}
\newtheorem{proposition}[remark]{Proposition}
\newtheorem{corollary}[remark]{Corollary}
\newtheorem{problem}[remark]{Problem}
\begin{document}
\title{Mixed metric dimension of graphs}

\author{Aleksander Kelenc$^{(1)}$, Dorota Kuziak$^{(2)}$, Andrej Taranenko$^{(1,3)}$, Ismael G. Yero$^{(4)}$\\ \\
$^{(1)}${\small Faculty of Natural Sciences and Mathematics}\\
{\small University of Maribor,}  {\small Koro\v{s}ka 160, SI-2000 Maribor, Slovenia.} \\
{\small aleksander.kelenc\@@um.si}, {\small andrej.taranenko\@@um.si}\\
$^{(2)}${\small Departament d'Enginyeria Inform\`atica i Matem\`atiques,}\\
{\small Universitat Rovira i Virgili,} {\small Av. Pa\"{\i}sos Catalans 26, 43007 Tarragona, Spain.}\\
{\small dorota.kuziak\@@urv.cat}\\
$^{(3)}${\small Institute of Mathematics, Physics and Mechanics}\\
{\small Jadranska 19,}  {\small SI-1000 Ljubljana, Slovenia.} \\
$^{(4)}${\small Departamento de Matem\'aticas, Escuela Polit\'ecnica Superior de Algeciras}\\
{\small Universidad de C\'adiz,} {\small
Av. Ram\'on Puyol s/n, 11202 Algeciras, Spain.} \\
{\small ismael.gonzalez\@@uca.es}\\
}

\maketitle

\begin{abstract}
Let $G=(V,E)$ be a connected graph. A vertex $w\in V$ distinguishes two elements (vertices or edges) $x,y\in E\cup V$ if $d_G(w,x)\ne d_G(w,y)$. A set $S$ of vertices in a connected graph $G$ is a mixed metric generator for $G$ if every two elements (vertices or edges) of $G$ are distinguished by some vertex of $S$. The smallest cardinality of a mixed metric generator for $G$ is called the mixed metric dimension and is denoted by $\mdim(G)$. In this paper we consider the structure of mixed metric generators and characterize graphs for which the mixed metric dimension equals the trivial lower and upper bounds. We also give results about the mixed metric dimension of some families of graphs and present an upper bound with respect to the girth of a graph. Finally, we prove that the problem of determining the mixed metric dimension of a graph is NP-hard in the general case.
\end{abstract}

{\it Keywords:} mixed metric dimension; edge metric dimension; metric dimension.

{\it AMS Subject Classification Numbers:}   05C12; 05C76; 05C90.

\section{Introduction}

Given a simple and connected graph $G=(V,E)$ and two vertices $x,y\in V$, the distance $d_G(x,y)$ (or $d(x,y)$ for short) between $x$ and $y$ is the length of a shortest $x-y$ path.  A vertex $v\in V$ is said to \emph{distinguish} (we also use the terms ``recognize'' or ``determine'' instead of ``distinguish'') two vertices $x$ and $y$, if $d_G(v,x)\ne d_G(v,y)$. A set $S\subset V$ is called a \emph{metric generator} for $G$ if any pair of vertices of $G$ is distinguished by some element of $S$. A metric generator of minimum cardinality is a \emph{metric basis}, and its cardinality the \emph{metric dimension} of $G$, denoted by $\dim(G)$.

The concept of metric dimension was introduced by Slater in \cite{leaves-trees}, where the metric generators were called \emph{locating sets}, according to some connection with the problem of uniquely recognizing the position of intruders in networks. On the other hand, the concept of metric dimension of a graph was independently introduced by Harary and Melter in \cite{harary}, where metric generators were named \emph{resolving sets}. After these two seminal papers, several works concerning applications, as well as some theoretical properties, of this invariant were published. For instance, applications to the navigation of robots in networks are discussed in \cite{landmarks} and applications to chemistry in \cite{chartrand,chartrand1,pharmacy1}. Furthermore, this topic has found some applications to problems of pattern recognition and image processing, some of which involve the use of hierarchical data structures \cite{Melter1984}. Some interesting connections between metric generators in graphs and the Mastermind game or coin weighing have been presented in \cite{Caceres2007}.

On the other hand, with respect to the theoretical studies on this topic, different points of view of metric generators have been described in the literature, which have highly contributed to gain more insight into the mathematical properties of this parameter related with distances in graphs. Several authors have introduced other variations of metric generators like for instance, resolving dominating sets \cite{brigham}, independent resolving sets \cite{chartrand3}, local metric sets \cite{LocalMetric}, strong resolving sets \cite{Oellermann}, simultaneous metric generators \cite{Ramirez-Cruz-1}, $k$-metric generators \cite{Estrada-Moreno-1}, resolving partitions \cite{chartrand2}, strong resolving partitions \cite{GonzalezYero2013}, $k$-antiresolving sets \cite{Trujillo-1}, etc. have been presented and studied.

Moreover, a few other very interesting articles concerning metric dimension of graphs can be found in the literature. However, according to the amount of results on this topic, we prefer to cite only those papers which are important from our point of view. In concordance with it, we refer the reader to the work \cite{Bailey2011a}, where it can be found some historical evolution, nonstandard terminologies and more references on this topic, and the recent work \cite{Estrada-Moreno-2}, where a general approach on metric generators is described. Some other interesting results and a high number of references can be found in the theses \cite{Estrada-Moreno,Kuziak2014a,Ramirez-Cruz}.

In connection with describing other new variants of metric generators in graph, very recently a parameter used to uniquely recognize the edges of the graph has been introduced in \cite{edge-dim}. Roughly speaking, there was used a graph metric to identify each pair of edges by mean of distances to a fixed set of vertices. This was based on the fact that a metric basis $S$ of a connected graph $G$ uniquely identifies all the vertices of $G$ by mean of distance vectors, but not necessarily such metric basis uniquely recognizes all the edges of the graph. In this sense, the following concepts deserved to be considered.

Given a connected graph $G=(V,E)$, a vertex $v\in V$ and an edge $e=uw\in E$, the distance between the vertex $v$ and the edge $e$ is defined as $d_G(e,v)=\min\{d_G(u,v),d_G(w,v)\}$. A vertex $x\in V$ \emph{distinguishes} (\emph{recognizes} or \emph{determines}) two edges $e_1,e_2\in E$ if $d_G(x,e_1)\ne d_G(x,e_2)$. A set $S$ of vertices in a connected graph $G$ is an \emph{edge metric generator} for $G$ if every two edges of $G$ are distinguished by some vertex of $S$. The smallest cardinality of an edge metric generator for $G$ is called the \emph{edge metric dimension} and is denoted by $\edim(G)$. An \emph{edge metric basis} for $G$ is an edge metric generator for $G$ of cardinality $\edim(G)$.

Having defined the concept of edge metric generator, which uniquely determines every edge of the graph, one could think that probably any edge metric generator $S$ is also a standard metric generator, \emph{i.e.} every vertex of the graph is identified by $S$ or vice versa. However, as it proved in \cite{edge-dim}, this is further away from the reality, although there are several graph families in which such facts occur. In \cite{edge-dim}, among other results, some comparison between these two parameters above were discussed. As a consequence of the study, families of graphs $G$, for which $\edim(G)<\dim(G)$ or $\edim(G)=\dim(G)$ or $\dim(G)<\edim(G)$ hold were described.

In the present work we focus in a kind of mixed version of these two parameters described above. That is, given a connected graph $G$, we wish to uniquely identify the elements (edges and vertices) of $G$ by means of vector distances to a fixed set of vertices of $G$.

Since the (edge or mixed) metric dimension is defined only over connected graphs, in order to avoid repetitions, from now on in this article, all the graph which will be considered are connected, even so we do not explicitly mention it. Moreover, we do not consider here any graph with only one vertex (a singleton). That is, from now on, all the studied graphs contain at least two vertices.

In the next section we formally define mixed metric dimension of a graph and present equivalent definition of the problem in the form of  a linear program. Further, we study the structure of mixed metric generators. We present necessary conditions for a vertex to be included in a mixed metric generator. Moreover, we characterize graphs with extreme mixed metric dimensions (2 or number of vertices). In Section \ref{sec::fam} we present results about the mixed metric dimension of several families of graphs.  Section \ref{sec::bound} is used to give an upper bound for the mixed metric dimension of a graph with respect to the girth of the graph. Finally, in Section \ref{sec::complex} we study the complexity of the problem of determining the mixed metric dimension of a graph and show that it is NP-hard in general. We conclude the paper with three open problems.

\section{Definition of the problem}\label{sec::def}
We say that a vertex $v$ of a connected graph $G$ \emph{distinguishes} two elements (vertices or edges) $x,y$ of $G$ if $d_G(x,v)\ne d_G(y,v)$. A set $S$ of vertices of $G$ is a \emph{mixed metric generator} if any two elements (vertices or edges) of $G$ are distinguished by some vertex of $S$. The smallest cardinality of a mixed metric generator for $G$ is called the \emph{mixed metric dimension} and is denoted by $\mdim(G)$. A \emph{mixed metric basis} for $G$ is a mixed metric generator for $G$ of cardinality $\mdim(G)$.

The problem of determining the mixed metric dimension of a given graph can also be restated as the following optimization problem. Let us now present this mathematical programming model which can be used to solve the problem of computing the mixed metric dimension or finding a mixed metric basis for a graph $G$. A similar model for the case of the standard metric dimension was described in \cite{chartrand}.

Let $G$ be a graph of order $n$ and size $m$ with vertex set $V=\{v_1,v_2,\dots,v_n\}$ and edge set $E=\{e_1,e_2,\dots,e_m\}$. We consider the $n\times (n+m)$ dimensional matrix $D=[d_{ij}]$ such that $d_{ij}=d_G(x_i,x_j)$ and $x_i \in V$ and $x_j\in V\cup E$. Now, given the variables $y_i\in \{0,1\}$ with $i\in \{1,2,\dots,n\}$ we define the following function:
$$\mathcal{F}(y_1,y_2,\dots,y_{n})=y_1+y_2+\dots+y_{n}.$$
Clearly, minimizing the function $\mathcal{F}$ subject to the following constraints
$$\sum_{i=1}^{n}|d_{ij}-d_{il}|y_i\ge 1\;\;\mbox{for every $1\le j<l\le n+m$},$$
is equivalent to finding a mixed metric basis of $G$, since the solution for $y_{1}, y_{2},\dots, y_{n}$ represents a set of values for which the function $\mathcal{F}$ achieves the minimum possible, and this is equivalent to say that the set $W = \{v_i\in V\,:\, y_i=1\}$ is a mixed metric basis for $G$. On the other hand, let $W'$ be a mixed metric basis for $G$ and let $(y'_1,y'_2,\dots,y'_{n})$ be a vector such that for any $i\in \{1,2,\dots,n\}$, $y'_i=0$ if $v_i\notin W'$, or $y'_i=1$ if $v_i\in W'$. Thus, it is straightforward to observe that $\mathcal{F}(y'_1,y'_2,\dots,y'_{n})$ gives a minimum subject to the constraints given before.

\section{The Structure of Mixed Metric Generators}\label{sec::struc}
We next continue with several combinatorial properties of mixed metric generators. Firstly, it clearly follows that any mixed metric generator is also a metric generator and an edge metric generator. In this sense, the following relationship immediately follows. For any graph $G$,
\begin{equation}\label{mdim-dim-edim}
\mdim(G)\ge \max\{\dim(G),\edim(G)\}.
\end{equation}

On the other hand, it is not difficult to see that the whole vertex set of any graph $G$ forms a mixed metric generator. Also, any vertex of $G$ and any incident edge with it, have the same distance to the vertex itself. In this sense, a vertex alone cannot form a mixed metric generator in $G$. As a consequence of these situations, the following remark is readily seen to be true.

\begin{remark}\label{trivial-bounds}
For any graph $G$ of order $n$, $2\le \mdim(G)\le n$.
\end{remark}

First, we present some necessary terminology and several useful propositions about the structure of mixed metric generators. The \emph{open neighbourhood} $N(v)$ of a vertex $v$ in a graph $G$ is given by all the vertices which are adjacent to $v$ and the \emph{closed neighbourhood} of $v$ is $N[v]=N(v)\cup \{v\}$. The vertex $v$ is called an \emph{extreme vertex} if $N(v)$ induces a complete graph. Two vertices $u,v$ of $G$ are called \emph{false twins} if they are have the same open neighbourhoods, \emph{i.e.}, $N(u)=N(v)$. Similarly, the vertices $u,v$ are called \emph{true twins} if $N[u]=N[v]$. A vertex $v$ is a true twin or a false twin in $G$, if there exists $u\ne v$ such that $u,v$ are true twins or false twins, respectively.

\begin{proposition}\label{true-twins}
If $u,v$ are true twins in a graph $G$, then $u,v$ belong to every mixed metric generator for $G$.
\end{proposition}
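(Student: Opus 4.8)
The plan is to exploit the fact that true twins are indistinguishable from the outside, and then to break the symmetry by pairing each twin vertex with the edge joining them. First I would record the basic distance property: if $u,v$ are true twins then $N[u]=N[v]$ forces $u$ and $v$ to be adjacent, and moreover every vertex $w\notin\{u,v\}$ satisfies $d_G(w,u)=d_G(w,v)$. I would prove this last equality by a short triangle-inequality argument: since $uv\in E$ we have $d_G(w,u)\le d_G(w,v)+1$ and vice versa, so it suffices to rule out strict inequality. If $d_G(w,u)<d_G(w,v)$, the penultimate vertex $x$ on a shortest $w$--$u$ path is a neighbour of $u$; it cannot be $v$ (that would give $d_G(w,v)<d_G(w,u)$), so $x\in N(u)\setminus\{v\}=N(v)\setminus\{u\}$, whence $x$ is adjacent to $v$ and $d_G(w,v)\le d_G(w,x)+1=d_G(w,u)$, a contradiction. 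Symmetry then yields equality.

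The crux is then the following observation about the edge $e=uv$. For any vertex $w\notin\{u,v\}$ the distance equality above gives $d_G(w,e)=\min\{d_G(w,u),d_G(w,v)\}=d_G(w,u)=d_G(w,v)$, so such a $w$ distinguishes neither $u$ nor $v$ from the edge $e$. The behaviour at the two twin vertices is where the asymmetry appears: $d_G(u,u)=0=d_G(u,e)$ while $d_G(u,v)=1\ne 0=d_G(u,e)$, and likewise $d_G(v,v)=0=d_G(v,e)$ while $d_G(v,u)=1\ne 0=d_G(v,e)$. Thus the pair consisting of the vertex $u$ and the edge $e$ is distinguished by $v$ and by no other vertex, and symmetrically the pair consisting of the vertex $v$ and the edge $e$ is distinguished only by $u$.

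I would conclude by combining these two facts: any mixed metric generator $S$ must distinguish the vertex $u$ from the edge $e$, which forces $v\in S$, and it must distinguish the vertex $v$ from the edge $e$, which forces $u\in S$. Hence both $u$ and $v$ lie in every mixed metric generator, as claimed.

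I expect the only genuinely delicate step to be the proof that $d_G(w,u)=d_G(w,v)$ for all external $w$; once that distance equality is in hand, the decisive idea — that the vertex-edge pairs $(u,e)$ and $(v,e)$ each admit a unique distinguisher, namely the opposite twin — is exactly the feature that upgrades the standard ``at least one twin'' conclusion to ``both twins,'' and it follows by direct evaluation of the four distances at $u$ and $v$.
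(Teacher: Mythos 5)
Your proof is correct and takes essentially the same route as the paper: the decisive observation that the edge $uv$ is distinguished from the vertex $u$ only by $v$, and from the vertex $v$ only by $u$, is precisely the paper's argument. The only difference is that you carefully verify the underlying distance equality $d_G(w,u)=d_G(w,v)$ for all $w\notin\{u,v\}$ (and the adjacency of true twins), steps the paper dismisses as clear.
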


\begin{proof}
Since $u,v$ are adjacent, it clearly follows that the edge $uv$ and the vertex $v$ have the same distance to every vertex of the graph, except $u$. Similarly, the edge $uv$ and the vertex $u$ have the same distance to every vertex of the graph, except $v$. As a consequence, $u,v$ must belong to every mixed metric generator for $G$.
\end{proof}

\begin{proposition}\label{false-twins}
If $u,v$ are false twins in a graph $G$ and $S$ is a mixed metric generator for $G$, then $\{u,v\}\cap S\ne \emptyset$.
\end{proposition}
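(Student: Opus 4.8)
The plan is to show that false twins are metrically indistinguishable by every vertex other than themselves, so that any vertex set avoiding both $u$ and $v$ already fails to resolve the \emph{pair of vertices} $u,v$, let alone the mixed instance.

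First I would establish the key distance identity: for every vertex $w\in V\setminus\{u,v\}$ one has $d_G(w,u)=d_G(w,v)$. To see this, fix such a $w$ and take a shortest $w$--$u$ path; let $x$ be the neighbour of $u$ on this path immediately preceding $u$, so that $d_G(w,u)=d_G(w,x)+1$. Since $x\in N(u)=N(v)$, the vertex $x$ is also adjacent to $v$, and concatenating the $w$--$x$ subpath with the edge $xv$ witnesses $d_G(w,v)\le d_G(w,x)+1=d_G(w,u)$. Note that $x\ne v$, because $x$ is adjacent to $u$ while $u,v$ are non-adjacent (as $u\notin N(u)=N(v)$). Exchanging the roles of $u$ and $v$ yields the reverse inequality, hence equality. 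The only degenerate case to record is $d_G(w,u)=1$, where $w\in N(u)=N(v)$ forces $d_G(w,v)=1$ directly.

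With this identity in hand, I would argue by contradiction. Suppose $S$ is a mixed metric generator with $\{u,v\}\cap S=\emptyset$. Then every $w\in S$ lies in $V\setminus\{u,v\}$, so by the identity no vertex of $S$ distinguishes the vertices $u$ and $v$. This contradicts the observation preceding~\eqref{mdim-dim-edim} that a mixed metric generator is in particular a metric generator, and hence must resolve every pair of vertices. Therefore $\{u,v\}\cap S\ne\emptyset$, as claimed.

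The substantive content is entirely in the distance identity, which is a short shortest-path computation; the points needing care are the two degenerate cases in that argument (the length-one path and ensuring $x\ne v$). I do not expect a genuine obstacle beyond this bookkeeping, and it is worth noting that, in contrast with the true-twin case of Proposition~\ref{true-twins} where both vertices are forced in, here only one of $u,v$ is required, since any single chosen vertex of the pair distinguishes $u$ from $v$ by distance $0$ versus positive.
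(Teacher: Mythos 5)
Your proof is correct and follows the same route as the paper: the paper's proof simply asserts that false twins have the same distance to every vertex except themselves and concludes that a mixed metric generator must contain one of them, while you fill in the short shortest-path verification of that identity (including the check that $u,v$ are non-adjacent). No discrepancy to report.
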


\begin{proof}
If $u,v$ are false twins, it clearly follows that they have the same distance to every vertex of $G$ except themselves. Thus, if $S$ is a mixed metric generator for $G$, then at least one of them must belong to $S$.
\end{proof}

\begin{proposition}\label{extremes}
If $u$ is an extreme vertex in a graph $G$, then $u$ belongs to every mixed metric generator for $G$.
\end{proposition}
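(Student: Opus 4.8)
The plan is to force $u$ into every mixed metric generator by exhibiting a single pair of elements that \emph{only} $u$ distinguishes; then $V\setminus\{u\}$ cannot be a mixed metric generator, and since $V$ itself always is one, $u$ must lie in every mixed metric generator. Because $G$ is connected with at least two vertices, $u$ has a neighbour $w$, and I propose to use the pair consisting of the vertex $w$ and the edge $e=uw$. Note first that $u$ distinguishes this pair, since $d_G(u,w)=1$ while $d_G(u,e)=\min\{d_G(u,u),d_G(u,w)\}=0$.

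The crux is to show that no other vertex distinguishes $w$ from $e$, and this is exactly where the extremeness of $u$ enters. The key inequality to establish is that for every vertex $z\ne u$ and every neighbour $w$ of $u$ one has $d_G(z,w)\le d_G(z,u)$. To see this, take a shortest $z$--$u$ path; since $z\ne u$ its last edge enters $u$ from some neighbour $w^{*}\in N(u)$, so that $d_G(z,u)=d_G(z,w^{*})+1$. As $u$ is extreme, $N(u)$ induces a complete graph, hence $w$ and $w^{*}$ are adjacent or equal, giving $d_G(w,w^{*})\le 1$. The triangle inequality then yields $d_G(z,w)\le d_G(z,w^{*})+1=d_G(z,u)$, as claimed.

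With this inequality in hand the rest is a short computation: for any $z\ne u$ we get $d_G(z,e)=\min\{d_G(z,u),d_G(z,w)\}=d_G(z,w)$, so $z$ assigns the same distance to the vertex $w$ and to the edge $e$ and therefore fails to distinguish them. Consequently $u$ is the unique vertex distinguishing this pair, which completes the argument. I expect the only real obstacle to be the uniform inequality $d_G(z,w)\le d_G(z,u)$: a priori the neighbour through which a shortest path reaches $u$ depends on $z$ and need not be the fixed $w$, and it is precisely the clique structure on $N(u)$ that bridges this gap by keeping all neighbours of $u$ within distance one of one another.
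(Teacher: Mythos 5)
Your proof is correct and takes essentially the same approach as the paper: both arguments exhibit, for a neighbour $w$ of the extreme vertex $u$, the pair consisting of the vertex $w$ and the edge $uw$, and show these two elements have equal distance to every vertex other than $u$, forcing $u$ into every mixed metric generator. The only difference is that you spell out in full the clique-plus-triangle-inequality step (that $d_G(z,w)\le d_G(z,u)$ for all $z\ne u$) which the paper leaves implicit.
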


\begin{proof}
Since $N(u)$ induces a complete graph, for any vertex $v\in N(u)$ it follows that the edge $uv$ and the vertex $v$ have the same distance to every vertex of the graph, except $u$. Therefore, the vertex $u$ must belong to every mixed metric generator for $G$.
\end{proof}

As a direct consequence of Proposition \ref{extremes} we get the following result.

\begin{corollary}\label{leaves}
If $u$ is a vertex of degree 1 in a graph $G$, then $u$ belongs to every mixed metric generator for $G$.
\end{corollary}

We next deal with characterizing the families of graphs achieving the equality in the bounds from Remark \ref{trivial-bounds}.

\begin{theorem}\label{path-complete}
Let $G$ be any graph of order $n$. Then $\mdim(G)=2$ if and only if $G$ is a path.
\end{theorem}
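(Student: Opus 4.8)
The plan is to prove both implications; the forward direction (paths have mixed metric dimension $2$) is routine, while the converse (only paths do) is the substantive part.

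First I would treat the easy direction. Let $G=P_n$ be the path $v_1v_2\cdots v_n$ and take $S=\{v_1,v_n\}$. I would record the distance pair $(d(v_1,\cdot),d(v_n,\cdot))$ of every element: a vertex $v_i$ has pair $(i-1,\,n-i)$, while an edge $v_iv_{i+1}$ has pair $(i-1,\,n-i-1)$. The two coordinates of a vertex pair always sum to $n-1$, whereas those of an edge pair sum to $n-2$; hence no vertex shares its pair with an edge, and within each class the first coordinate is already injective. Thus $S$ distinguishes every pair of elements, so $\mdim(P_n)\le 2$, and equality follows from Remark~\ref{trivial-bounds}.

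For the converse I would assume $S=\{u,w\}$ is a mixed metric generator and show $G$ is a path. The central observation concerns the edges incident to a fixed vertex $v$: writing $d_u=d(u,v)$ and $d_w=d(w,v)$, any incident edge $e=vb$ has distance pair $(\min(d_u,d(u,b)),\min(d_w,d(w,b)))$, which equals $(d_u,d_w)$ with each coordinate possibly lowered by one, the $x$-coordinate dropping exactly when $x$ is strictly closer to $b$ than to $v$. If neither coordinate drops, $e$ shares its pair with the vertex $v$; if both drop, then $d(u,b)=d_u-1$ and $d(w,b)=d_w-1$, so $e$ shares its pair with the vertex $b$. Either case contradicts $S$ being a generator, so every incident edge lowers exactly one coordinate and thus has pair $(d_u-1,d_w)$ or $(d_u,d_w-1)$. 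Since distinct edges need distinct pairs, $v$ has at most two incident edges, i.e. $\deg(v)\le 2$. Hence $G$, being connected of maximum degree at most $2$, is a path or a cycle.

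It remains to exclude cycles, and this is where I expect the main difficulty. Suppose $G=C_n$ with $u=v_0$. The structural fact I would extract is that, for every edge $ab$, one of $u,w$ must be strictly closer to $a$ and the other strictly closer to $b$ (this is precisely what separates $ab$ from both of its endpoints); in particular no vertex of $S$ may be equidistant from the ends of any edge. When $n=2m+1$ is odd, $u=v_0$ is equidistant from the ends $v_m,v_{m+1}$ of the antipodal edge, so $w$ alone would have to be strictly closer to each of $v_m,v_{m+1}$ simultaneously, which is impossible. When $n=2m$ is even, comparing the closer endpoint chosen by $u$ and by $w$ on each edge forces these choices to be opposite everywhere; inspecting the two edges meeting at $v_m$ then pins down $w=v_m$, the antipode of $u$, after which the vertices $v_1$ and $v_{n-1}$ have identical distance pairs and are not distinguished. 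Both cases contradict the assumption, so $G$ cannot be a cycle and must be a path. The delicate points are establishing the incident-edge dichotomy cleanly and handling the even cycle, where the contradiction only surfaces after $w$ is identified as the antipode of $u$.
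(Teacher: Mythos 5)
Your proof is correct, but the converse takes a genuinely different route from the paper's. The paper fixes a mixed metric basis $S=\{u,v\}$ and argues globally: every neighbour of $v$ must be strictly closer to $u$, the shortest $u$--$v$ path is unique, and a vertex of degree at least $3$ on that path (or a degree-$2$ endpoint) would produce a shorter or second shortest $u$--$v$ path, so that $G$ is forced to be exactly that path. You instead argue locally: your incident-edge dichotomy (exactly one coordinate of the distance pair must drop along each edge at a vertex $v$, since zero drops makes the edge collide with $v$ and two drops makes it collide with the other endpoint) caps every degree at $2$ by pigeonhole on the only two admissible edge pairs $(d_u-1,d_w)$ and $(d_u,d_w-1)$; this reduces the problem to the path-or-cycle dichotomy, and you then eliminate cycles by parity: in the odd case $u$ is equidistant from the two ends of its antipodal edge, contradicting your (correct) observation that each endpoint of every edge needs its own strictly-closer witness in $S$, while in the even case the opposite-choice constraint on the two edges at $v_m$ forces $w$ to be the antipode of $u$, after which $v_1$ and $v_{n-1}$ share a distance pair. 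Both arguments are sound; yours is more modular and the degree bound is cleaner, while the paper's extracts the path structure directly and needs no case split on cycles. One point worth noting in your favour: your from-scratch treatment of cycles is essential for non-circularity, since the paper derives $\mdim(C_n)\ge 3$ (Proposition \ref{cycle}) from this very theorem, so the cycle case could not have been delegated to that proposition --- and you did not.
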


\begin{proof}
By Corollary \ref{leaves} both end-vertices of the path must be in every mixed metric generator, therefore  $\mdim(P_n) \geq 2$.  It is straightforward to observe that for any path $P_n$ the two leaves of the path distinguish all pairs of elements (vertices and/or edges) of the path. It follows that $\mdim(P_n)=2$.

For the converse, assume $G$ satisfies that $\mdim(G)=2$ and let $S=\{u,v\}$ be any mixed metric basis. If there is a neighbour $v'$ of $v$ such that $d(v',u)\ge d(v,u)$, then $d(v'v,u)=d(v,u)$, which means that the edge $v'v$ and the vertex $v$ are not distinguished by any vertex of $S$, a contradiction. Thus, for any vertex $v'$ adjacent to $v$ it follows that $d(v',u)=d(v,u)-1$.

Now, if there exist two vertices $x$ and $y$ belonging to two different shortest $u-v$ paths such that $d(x,u)=d(y,u)$, then also $d(x,v)=d(y,v)$, which means $x,y$ are not distinguished by $S$, a contradiction again.

So, there exists exactly one shortest $u-v$ path in $G$, say $P=uw_1w_2\ldots w_rv$. Suppose there exists $i \in \{1, \ldots, r\}$ such that the vertex $w_i$ in $P$ is of degree at least three and let $w'$ be a neighbour of $w_i$ which is not in $P$. Since $S$ is a mixed metric basis, the edge $w_iw'$ and the vertex $w_i$ are distinguished by some $x\in S$. This means that $d(w_i, x) \not = d(w_iw', x) = \min\{d(w_i, x), d(w',x)\}$. It follows that $d(w',x) < d(w_i,x)$. Let $x' \in S\setminus \{x\}$. Since $d(w',x) \leq d(w_i,x) - 1$, there is a path $Q=x\ldots w'w_i\ldots x'$ of length $d(x, w')+d(w',w_i)+d(w_i,x') \leq d(w_i,x) - 1 + 1 + d(w_i,x') = d(w_i,x)+d(w_i,x')$ from $x$ to $x'$ (note that $\{x,x'\}=\{u,v\}$), a contradiction since this is either a $u-v$ path shorter than $P$ (which is the shortest $u-v$ path) or a path of the same length than $P$ (contradicting the uniqueness of $P$). Thus, every vertex $w_i$, with $i \in \{1, \ldots, r\}$, in $P$ has degree two.

It remains to prove that $u$ and $v$ are both of degree 1. Suppose $u$ is of degree at least 2. Let $u'$ be the neighbour of $u$ which not in $P$. Since $S$ is a mixed metric basis, the vertex $v$ must distinguish the edge $uu'$ and the vertex $u$. It follows that $d(u',v) < d(u,v)$. Following the same line of thought as for the case above we obtain contradiction for all possibilities. Therefore $u$ is of degree 1. Analogously, $v$ is of degree 1. Since $G$ is connected, it follows that $G$ must be a path.
\end{proof}

\begin{lemma}\label{lemma:cardinality_n-1}
Let $v$ be an arbitrary vertex in a graph $G$ and let $S=V(G) \setminus \{v\}$. If $\;\forall w \in N(v),\; \exists\, x \in S:  d(vw,x) \neq d(w,x)$, then $S$ is a mixed metric generator for the graph $G$.
\end{lemma}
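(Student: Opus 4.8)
The plan is to exploit that $S=V(G)\setminus\{v\}$ omits only a single vertex: a pair of elements fails to be resolved by $S$ if and only if the two elements are equidistant from every vertex of $G$ except possibly $v$, so that $v$ is the only vertex that could ever separate them. I would therefore show that no such unresolved pair exists, organising the argument by the type of the two elements (vertex--vertex, edge--edge, vertex--edge), and I expect the hypothesis to be invoked for exactly one narrow family of pairs.

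For two distinct vertices $a,b$, at least one of them lies in $S$, say $a$; since $d(a,a)=0$ while $d(b,a)\ge 1$, the vertex $a$ itself distinguishes them. For two distinct edges $e,f$, I would compare their endpoint sets $E_e$ and $E_f$. As $e\neq f$ and both sets have size $2$, the symmetric difference $E_e\triangle E_f$ has even cardinality at least $2$; hence it contains a vertex $x\neq v$, and so $x\in S$. Such an $x$ is an endpoint of exactly one of $e,f$, whence one of $d(e,x),d(f,x)$ equals $0$ and the other is positive, and $x$ distinguishes $e$ from $f$. Thus both the vertex--vertex and the edge--edge pairs are handled with no appeal to the hypothesis.

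The vertex--edge pairs carry the real content. Given a vertex $a$ and an edge $e$: if $a$ is not an endpoint of $e$, then any endpoint of $e$ lying in $S$ (one always exists, since $e$ has two endpoints and only $v$ is excluded) is at distance $0$ from $e$ but positive distance from $a$, so it distinguishes them. If $a$ is an endpoint, write $e=ab$; separating $a$ from $e$ amounts to finding $x\in S$ with $d(b,x)<d(a,x)$, equivalently $d(e,x)\neq d(a,x)$. When $b\neq v$ the choice $x=b$ works, since $d(b,b)=0<d(a,b)$. The single remaining configuration is $b=v$, that is, the pair consisting of the vertex $a$ and the edge $av$ with $a\in N(v)$; here the required inequality reads $d(va,x)\neq d(a,x)$, which is precisely the condition guaranteed by the hypothesis applied to the neighbour $a$ of $v$.

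The main obstacle is therefore conceptual rather than computational: recognising that among all pairs the only ones $S$ cannot resolve by itself are exactly those of the form (vertex $w$, edge $vw$) with $w\in N(v)$, and that the hypothesis is tailored precisely to eliminate this family. Once the three cases are arranged this way, the verification of each reduces to a short distance comparison.
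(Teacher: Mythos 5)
Your proof is correct and follows essentially the same route as the paper's: both reduce the problem to vertex--edge pairs and identify the pairs $(w, vw)$ with $w\in N(v)$ as the only ones requiring the hypothesis. The only difference is cosmetic: the paper simply cites the known fact that any $(n-1)$-subset of vertices is both a metric and an edge metric generator, whereas you verify the vertex--vertex and edge--edge cases directly (your symmetric-difference argument for edges is a fine self-contained substitute).
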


\begin{proof}
If we want to prove that $S$ is a mixed metric generator, we have to show that any two elements (vertices or edges) of the graph $G$ are distinguished by some vertex from the set $S$.
Any subset of $V(G)$ with cardinality $n-1$ is a metric generator and also an edge metric generator. So, we only have to check pairs of elements, where one element is a vertex and the other is an edge.
Let $e \in E(G)$ be an arbitrary edge. The vertex $v$ and the edge $e$ are distinguished by at least one endpoint of the edge $e$.
All vertices different from $v$ are in the set $S$. This means that for an arbitrary vertex $u \in V(G)\setminus \{v\}$ we only have to check the edges that are incident with the vertex $u$. If both endpoints of the edge $e=uw$ are in the set $S$, then $u$ and $e$ are distinguished by the vertex $w$.
It remains to check only the pairs of vertices $w$ and edges $wv$ for all $w \in N(v)$. Since we know that for all such pairs there $\exists x \in S:  d(vw,x) \neq d(w,x)$ it follows that $S$ is a mixed metric generator.
\end{proof}

Let $v$ be a vertex of a graph $G$. A vertex $u \in N(v)$ is said to be a \emph{maximal neighbour} of the vertex $v$ if all neighbours of $v$ (and $v$ itself) are also in the closed neighbourhood of $u$.
Now, we are ready to characterize the family of graphs $G$ satisfying that $\mdim(G)=n$.

\begin{theorem}\label{includingAll}
Let $G$ be a graph of order $n$.
Then $\mdim(G)=n$ if and only if every vertex of the graph $G$ has a maximal neighbour.
\end{theorem}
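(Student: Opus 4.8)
The plan is to reduce the statement to a local condition about each single vertex, using the monotonicity of mixed metric generators together with Lemma \ref{lemma:cardinality_n-1}. Since $\mdim(G)\le n$ always holds (Remark \ref{trivial-bounds}), and since any superset of a mixed metric generator is again a mixed metric generator, one has $\mdim(G)=n$ precisely when no set of the form $V(G)\setminus\{v\}$ is a mixed metric generator. Indeed, if $\mdim(G)<n$ then a mixed metric basis misses some vertex $v$ and is therefore contained in $V(G)\setminus\{v\}$, forcing that set to be a generator; conversely, any generator of size $n-1$ witnesses $\mdim(G)\le n-1$. So the theorem will follow once I show, for a fixed vertex $v$, that $V(G)\setminus\{v\}$ fails to be a mixed metric generator if and only if $v$ has a maximal neighbour.

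First I would pin down exactly which pairs the set $S=V(G)\setminus\{v\}$ can fail to distinguish. By Lemma \ref{lemma:cardinality_n-1} (and its immediate converse, since a generator must distinguish every pair), $S$ is a mixed metric generator if and only if for every $w\in N(v)$ some $x\in S$ satisfies $d(vw,x)\ne d(w,x)$. Thus $S$ fails exactly when there is a neighbour $w$ of $v$ such that no vertex of $V(G)\setminus\{v\}$ distinguishes the edge $vw$ from the vertex $w$. Unfolding $d(vw,x)=\min\{d(v,x),d(w,x)\}$, a vertex $x$ distinguishes $vw$ from $w$ iff $d(v,x)<d(w,x)$; hence the failure condition becomes: there exists $w\in N(v)$ with $d(w,x)\le d(v,x)$ for all $x\in V(G)\setminus\{v\}$.

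The heart of the argument is then to identify this distance condition with $w$ being a maximal neighbour of $v$, that is, with $N[v]\subseteq N[w]$. For one direction, assuming $d(w,x)\le d(v,x)$ for all $x\ne v$, every $z\in N(v)$ satisfies $d(v,z)=1$, hence $d(w,z)\le 1$ and $z\in N[w]$; together with $v\in N[w]$ (as $w\in N(v)$) this yields $N[v]\subseteq N[w]$. For the converse, assuming $N[v]\subseteq N[w]$, I would take any $x\ne v$, pick a shortest $v$--$x$ path $v,p_1,\dots,p_k=x$, note that $p_1\in N(v)\subseteq N[w]$ so $d(w,p_1)\le 1$, and conclude by the triangle inequality that $d(w,x)\le d(w,p_1)+d(p_1,x)\le 1+(k-1)=d(v,x)$. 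This equivalence is the step I expect to require the most care, since it is where the combinatorial ``maximal neighbour'' notion must be matched precisely to the metric condition, and only here is the shortest-path structure genuinely used.

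Finally I would assemble the pieces: $V(G)\setminus\{v\}$ is not a mixed metric generator if and only if $v$ has a maximal neighbour, and therefore $\mdim(G)=n$ if and only if every vertex of $G$ has a maximal neighbour. The surrounding reductions are routine once Lemma \ref{lemma:cardinality_n-1} is invoked, so the main obstacle remains the distance-versus-neighbourhood equivalence described above.
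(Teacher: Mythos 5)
Your proposal is correct and follows essentially the same route as the paper's proof: both reduce via Lemma \ref{lemma:cardinality_n-1} to the pairs consisting of the edge $vw$ and the vertex $w$ for $w\in N(v)$, both use a neighbour of $v$ non-adjacent to $w$ (distances $1$ versus $2$) when the maximal-neighbour condition fails, and both use the first vertex on a shortest $v$--$x$ path together with the triangle inequality when it holds. The only difference is organizational: you package the argument as a single clean local equivalence ($V(G)\setminus\{v\}$ fails to be a mixed metric generator if and only if $v$ has a maximal neighbour, using monotonicity of generators), whereas the paper proves the two implications of the theorem separately by contradiction.
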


\begin{proof}
First let $\mdim(G)=n$. We want to prove that $\forall v \in V(G), \exists u \in N(v): N[v] \subseteq N[u]$.
Towards contradiction suppose that there $\exists v \in V(G), \forall u \in N(v): N[v] \not\subseteq N[u]$.
Let $S=V(G) \setminus \{v\}$. We claim that $S$ is a mixed metric generator.

If $S$ is not a mixed metric generator, then due to Lemma \ref{lemma:cardinality_n-1} there $\exists w \in N(v), \forall x \in S:  d(vw,x)=d(w,x)$.
Since $w \in N(v)$ it follows that $N[v] \not\subseteq N[w]$, so there exists $v' \in N(v): wv' \notin E(G)$. It follows that $1=d(vw,v') \neq d(w,v')=2 $, a contradiction. So $S$ is a metric generator and $\mdim(G)<n$, a contradiction.

For the converse assume that $\forall v \in V(G), \exists u \in N(v): N[v] \subseteq N[u]$. Suppose that $\mdim(G)<n$. Therefore there exists a mixed metric generator $S$ with cardinality $n-1$ and $v \in V(G): v \notin S $. Let $u \in N(v)$ be a neighbour of $v$ for which it holds that $N[v] \subseteq N[u]$. Since $S$ is a mixed metric generator, there must exist $x \in S$, such that $d(u,x) \neq d(uv,x)$. Thus, it follows that $d(v,x)<d(u,x)$.
On an arbitrary shortest path between $x$ and $v$ there exists $v' \in N(v)$ such that $d(v,x)=d(v',x)+1$. Since $N[v] \subseteq N[u]$ it follows that $d(v,x) \geq d(u,x)$, a contradiction. Therefore $\mdim(G)=n$.
\end{proof}

\section{Mixed Metric Dimension of Some Families of Graphs}\label{sec::fam}
In this section we determine the mixed metric dimension of cycles, complete bipartite graphs, trees and grid graphs.

\begin{proposition}\label{cycle}
For any positive integer $n\ge 4$, $\mdim(C_n)=3$.
\end{proposition}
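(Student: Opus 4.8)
The plan is to establish the two bounds separately. For the lower bound I would observe that $C_n$ is $2$-regular, hence has no vertex of degree $1$ and is not a path; since Remark~\ref{trivial-bounds} already gives $\mdim(C_n)\ge 2$ and Theorem~\ref{path-complete} says equality holds only for paths, we get $\mdim(C_n)\ge 3$. It then remains to exhibit a mixed metric generator of cardinality $3$.

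For the upper bound I would label the cycle $v_0v_1\cdots v_{n-1}v_0$ and adopt a uniform description of distances: assign to each vertex $v_i$ the position $i$ and to each edge $v_iv_{i+1}$ the position $i+\tfrac12$ on a circle of circumference $n$, and let $\rho(\cdot,\cdot)$ denote cyclic distance. A short computation shows that for every vertex $g$ and every element (vertex or edge) $x$ with position $p_x$ one has $d(g,x)=\lfloor \rho(g,p_x)\rfloor$, so $g$ fails to distinguish $x,y$ exactly when $\lfloor\rho(g,p_x)\rfloor=\lfloor\rho(g,p_y)\rfloor$. I would then take $S=\{v_0,v_a,v_{a+b}\}$ chosen so that the three arcs determined by $S$ have lengths $a,b,c$ with $a+b+c=n$ and $\max\{a,b,c\}\le\lfloor n/2\rfloor$; such a choice exists for all $n\ge 4$ (take the arcs as equal as possible, using $\lceil n/3\rceil\le\lfloor n/2\rfloor$). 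The verification then splits into vertex--vertex, edge--edge and vertex--edge pairs.

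The easy pairs are all those \emph{not} of the form ``a vertex and one of its two incident edges''. Regarding $d(g,\cdot)$ as the floor of a tent-shaped function of $g$, the difference of the two distance functions attached to $x$ and $y$ is piecewise linear, and it is constant on a whole arc precisely in the degenerate vertex--incident-edge case. In every other case the critical equality $\lfloor\rho(g,p_x)\rfloor=\lfloor\rho(g,p_y)\rfloor$ holds for at most two vertices $g$: for vertex--vertex this is the classical fact that two points of a cycle have at most two equidistant vertices, and the edge--edge and non-incident vertex--edge cases follow from the same argument applied at the half-integer positions (once the constant arc is excluded, the relevant piecewise-linear difference meets the critical level at most twice). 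Since $|S|=3$, at least one vertex of $S$ distinguishes each such pair.

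The remaining, and genuinely delicate, pairs are each vertex $v_i$ together with an incident edge, say $v_iv_{i+1}$: here $v_i$ and $v_iv_{i+1}$ have the same distance to \emph{every} vertex lying behind $v_i$, and a vertex $g$ separates them only when $g$ lies in the clockwise arc $\{v_{i+1},\dots,v_{i+\lfloor n/2\rfloor}\}$ (and symmetrically for $v_{i-1}v_i$). This is the obstruction that makes three consecutive vertices fail — for instance in $C_6$ the set $\{v_0,v_1,v_2\}$ does not separate $v_0$ from the edge $v_5v_0$ — and it is the main point of the argument. The balanced choice of $S$ is exactly what removes it: from any vertex the nearest element of $S$ in each direction lies at distance at most $\max\{a,b,c\}\le\lfloor n/2\rfloor$, so every $(v_i,\text{incident edge})$ pair is separated by the nearest generator on the appropriate side. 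Combining the three cases shows that $S$ is a mixed metric generator, giving $\mdim(C_n)\le 3$ and hence $\mdim(C_n)=3$.
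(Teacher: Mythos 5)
Your proof is correct, and its upper-bound half takes a genuinely different route from the paper's. The lower bound is identical: both you and the paper combine Remark~\ref{trivial-bounds} with Theorem~\ref{path-complete}. For the upper bound, the paper verifies the single set $S=\{v_0,v_1,v_{\lceil n/2\rceil}\}$: because $v_0$ and $v_1$ are \emph{adjacent}, they already distinguish every pair of vertices and every pair of edges, so only vertex--edge pairs remain, and simultaneous failure at $v_0$ and $v_1$ forces the edge to be incident to the vertex, at which point $v_{\lceil n/2\rceil}$ separates the pair. You instead certify a whole family of generators --- any triple whose three arcs all have length at most $\lfloor n/2\rfloor$ --- via the half-integer position encoding $d(g,x)=\lfloor\rho(g,p_x)\rfloor$, a counting argument that every pair other than (vertex, incident edge) fails for at most two vertices of the cycle (so any three vertices suffice for these), and the observation that the set of vertices separating $(v_i,v_iv_{i+1})$ is exactly the arc $\{v_{i+1},\dots,v_{i+\lfloor n/2\rfloor}\}$, which the balance condition guarantees $S$ meets. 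I checked your sketched steps and they hold: the floor formula is correct; for edge--edge pairs both cyclic distances lie in $\tfrac12+\ZZ$, so floor-equality forces genuine equidistance and hence at most two failing vertices; for non-incident vertex--edge pairs the difference $\rho(g,q)-\rho(g,p)$ is constant equal to $\pm\delta$ with $\delta\ge\tfrac32$ on its flat arcs, so the critical level $\tfrac12$ is attained at most twice, and the constant-arc degeneracy occurs only at $\delta=\tfrac12$, i.e.\ the incident case, exactly as you say. Note that the paper's set is a member of your family (its arcs are $1$, $\lceil n/2\rceil-1$ and $\lfloor n/2\rfloor$), so your argument subsumes the paper's and moreover identifies which $3$-sets work and which do not (your $\{v_0,v_1,v_2\}$ example in $C_6$ indeed fails, its arcs being $1,1,4$ with $4>3$); what the paper's specific choice buys in exchange is brevity, since adjacency of $v_0$ and $v_1$ disposes of the vertex--vertex and edge--edge cases in one line.
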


\begin{proof}
From Remark \ref{trivial-bounds} and Theorem \ref{path-complete} we know that $\mdim(C_n)\ge 3$. On the other hand, let $V(C_n)=\{v_0,v_1,\dots,v_{n-1}\}$ where $v_iv_{i+1}\in E(C_n)$ for every $i\in \{0,\dots,n-1\}$ and operation $i+1$ is done modulo $n$. Let $S=\{v_0,v_1,v_{\left\lceil\frac{n}{2}\right\rceil}\}$. It is clear that the vertices $v_0,v_1$ distinguish every pair of two distinct vertices or two distinct edges. Now, let $e$ be an edge and let $v_i$ be a vertex. If $d(e,v_0)=d(v_i,v_0)$ and $d(e,v_1)=d(v_i,v_1)$, then it must happen either $e=v_iv_{i+1}$ or $e=v_{i-1}v_{i}$. Thus, it follows either $d(e,v_{\left\lceil\frac{n}{2}\right\rceil})=d(v_{i+1},v_{\left\lceil\frac{n}{2}\right\rceil})<d(v_i,v_{\left\lceil\frac{n}{2}\right\rceil})$ or $d(e,v_{\left\lceil\frac{n}{2}\right\rceil})=d(v_{i-1},v_{\left\lceil\frac{n}{2}\right\rceil})<d(v_i,v_{\left\lceil\frac{n}{2}\right\rceil})$. Therefore, the edge $e$ and the vertex $v_i$ are distinguished by $v_{\left\lceil\frac{n}{2}\right\rceil}$ and, as a consequence, $S$ is a mixed metric generator of cardinality three, which completes the proof.
\end{proof}

\begin{proposition}\label{bipart}
For any positive integers $r,t\ge 2$, \[\mdim(K_{r,t})=\left\{\begin{array}{ll}
                                                                    r+t-1, & \text{if } r=2  \text{ or } t=2, \\
                                                                    r+t-2, & \text{otherwise.}
                                                                  \end{array}
\right.\]
\end{proposition}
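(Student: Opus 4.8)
The plan is to use the bipartition $V(K_{r,t})=A\cup B$ with $|A|=r$, $|B|=t$, together with the elementary distance structure of $K_{r,t}$: two vertices in the same part are at distance $2$ and two vertices in different parts at distance $1$. In particular every two vertices of $A$ share the open neighbourhood $B$ and every two vertices of $B$ share the open neighbourhood $A$, so any two vertices of the same part are false twins. By Proposition \ref{false-twins} a mixed metric generator $S$ must therefore omit at most one vertex of $A$ and at most one vertex of $B$, whence $|S|\ge (r-1)+(t-1)=r+t-2$, and any generator attaining this value has the form $S=(A\setminus\{a_0\})\cup(B\setminus\{b_0\})$ for some $a_0\in A$ and $b_0\in B$.

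The heart of the argument is to decide whether such an $S$ separates every vertex from every edge. A short computation gives, for $a\in A$ and $b\in B$, that $d(w,ab)=0$ when $w\in\{a,b\}$ and $d(w,ab)=1$ otherwise. Two consequences are immediate: distinct edges are always separated, since the coordinates where the distance vector of $ab$ vanishes are precisely those of $\{a,b\}\cap S$; and vertex-vertex pairs cause no trouble since $S$ omits only one vertex per part. Everything thus reduces to separating a vertex $x$ (say $x\in A$) from an edge $e=ab$. The useful asymmetry is that $x$ has distance $2$ to every other vertex of $A$, whereas $e$ has distance $0$ at its endpoints and $1$ elsewhere. Depending on the configuration, a separating vertex may be taken to be the endpoint $a$ (when $a\ne x$ and $a\in S$), the vertex $x$ itself (when $x\in S$ and $e$ is incident with $x$), or a further vertex of $A\cap S$ distinct from $x$. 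When $r,t\ge 3$ the surviving parts satisfy $|A\cap S|=r-1\ge 2$ and $|B\cap S|=t-1\ge 2$, which is exactly what guarantees that one such separating vertex always lies in $S$; together with the lower bound this gives $\mdim(K_{r,t})=r+t-2$ for $r,t\ge 3$.

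The remaining and most delicate point is the exceptional value when $r=2$ (the case $t=2$ being symmetric). Here the candidate $S=(A\setminus\{a_0\})\cup(B\setminus\{b_0\})$ fails for a pinpoint reason: writing $a_1$ for the unique vertex of $A\cap S$, the edge $a_1b_0$ and the vertex $a_1$ have equal distance to every vertex of $S$. Indeed, separating them would require a vertex of $S$ strictly closer to $b_0$ than to $a_1$, and the only candidates are the vertices of $A\setminus\{a_1\}$, none of which lie in $S$ when $r=2$. Hence no set of size $r+t-2$ is a mixed metric generator, so $\mdim(K_{r,t})\ge r+t-1$. For the matching upper bound I would take $S=V(K_{r,t})\setminus\{b_0\}$ and apply Lemma \ref{lemma:cardinality_n-1}: for each $w\in N(b_0)=A$, any vertex $x\in A\setminus\{w\}$ (which exists because $r\ge 2$ and lies in $S$) satisfies $d(b_0,x)=1<2=d(w,x)$, so $d(wb_0,x)\ne d(w,x)$; the lemma then certifies that $S$ is a mixed metric generator, giving $\mdim(K_{r,t})\le r+t-1$ and thus equality in the exceptional cases. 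The main obstacle throughout is the vertex-versus-edge analysis, and specifically isolating the obstruction responsible for the jump, namely the edge joining a surviving endpoint to a deleted vertex, which can only be resolved by a third vertex in the same part.
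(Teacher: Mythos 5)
Your proof is correct, and while its skeleton (the same candidate sets and the same obstruction in the exceptional case) matches the paper's, two of your ingredients are genuinely different. For the lower bound $\mdim(K_{r,t})\ge r+t-2$ you observe that the vertices within each part are pairwise false twins and invoke Proposition \ref{false-twins}; the paper instead cites the known values $\dim(K_{r,t})=\edim(K_{r,t})=r+t-2$ and inequality (\ref{mdim-dim-edim}). Your route is more self-contained and immediately yields that any generator of size $r+t-2$ omits exactly one vertex per part, a fact the paper extracts from properties of metric and edge metric bases. For the upper bound $r+t-1$ in the exceptional case you verify the hypothesis of Lemma \ref{lemma:cardinality_n-1} for $S=V(K_{r,t})\setminus\{b_0\}$ by an explicit computation, whereas the paper deduces $\mdim(K_{r,t})<r+t$ from Theorem \ref{includingAll} by noting that no vertex of $K_{r,t}$ has a maximal neighbour; both are short checks, yours being the more hands-on. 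The verification for $r,t\ge 3$ and the $r=2$ obstruction (the vertex $a_1$ versus the edge $a_1b_0$, indistinguishable because the only vertices of the graph strictly closer to $b_0$ than to $a_1$ lie in $A\setminus\{a_1\}$, hence outside $S$) coincide in substance with the paper's, where the same pair appears as $u$ and $uv$.

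One local slip to fix: in your list of possible separating vertices for a vertex $x\in A$ and an edge $e$, the second option --- ``the vertex $x$ itself (when $x\in S$ and $e$ is incident with $x$)'' --- is false: if $e$ is incident with $x$ then $d(e,x)=0=d(x,x)$, so a vertex never distinguishes itself from an incident edge (this is precisely why $\mdim(G)\ge 2$ in Remark \ref{trivial-bounds}; presumably you meant ``$e$ \emph{not} incident with $x$'', where $d(e,x)=1\ne 0$). The slip is harmless because your third option already covers every configuration: when $r,t\ge 3$ the set $A\cap S\setminus\{x\}$ is nonempty, and any vertex in it has distance $2$ to $x$ but distance $0$ or $1$ to every edge, so that single witness suffices, exactly as in the paper's distance-$2$ argument.
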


\begin{proof}
From \cite{Caceres2005} and \cite{edge-dim} we know that $\dim(K_{r,t})=\edim(K_{r,t})=r+t-2$. So, by using (\ref{mdim-dim-edim}) we have $\mdim(K_{r,t})\ge r+t-2$. Let $U$ and $V$ be the bipartition sets of $K_{r,t}$ with $|U|=r$ and $|V|=t$. We first consider the case $r=2$. Suppose $\mdim(K_{r,t})=r+t-2$ and let $S$ be a mixed metric basis for $K_{2,t}$. Since any metric basis or edge metric basis must contain at least $r-1$ vertices of $U$ and $t-1$ vertices of $V$, we deduce that $|U\cap S|=1$ and $|V\cap S|=t-1$. Let $u\in U\cap S$ and $v\in V-S$. We observe that the vertex $u$ has distance 0 to itself (vertex $u$) and distance 1 to every other vertex in $S$. Moreover, the edge $uv$ has distance 0 to the vertex $u$ and distance 1 to every other vertex in $S$. Thus, the vertex $u$ and the edge $uv$ are not distinguished by $S$, a contradiction. A similar contradiction is obtained if $t=2$. Therefore, $\mdim(K_{r,t})\ge r+t-1$ and the proof is completed by using Theorem \ref{includingAll}, since no vertex of $K_{r,t}$ admits a maximal neighbour.

From now on, assume $r,t\ge 3$. Let $S$ be set of cardinality $r+t-2$ such that it does not contain exactly one vertex from each bipartition set of $K_{r,t}$. Since $S$ is a metric basis and also an edge metric basis, we only need to check that $S$ distinguishes those pairs given by an edge and by a vertex. But, this is straightforward to observe since any edge of $K_{r,t}$ has distance 0 or 1 to every vertex of $S$ and for any vertex there is at least one vertex in $S$ at distance 2, since $r\ge 3$ and $t\ge 3$. Therefore, $S$ is a mixed metric generator of cardinality $r+t-2$ and the result follows.
\end{proof}

\begin{theorem}\label{bound-formula}
For any tree $T$ with $l(T)$ leaves, $\mdim(T)=l(T)$.
\end{theorem}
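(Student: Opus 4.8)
The plan is to prove the two bounds $\mdim(T)\ge l(T)$ and $\mdim(T)\le l(T)$ separately, where $l(T)$ is the number of leaves. The lower bound is immediate from Corollary \ref{leaves}: every vertex of degree $1$ lies in every mixed metric generator, so any mixed metric generator contains all $l(T)$ leaves and hence has cardinality at least $l(T)$. The remaining work is the upper bound, for which I would show that the set $L$ of all leaves is itself a mixed metric generator; this gives $\mdim(T)\le|L|=l(T)$ and finishes the proof.

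To show $L$ is a mixed metric generator I would first isolate two elementary facts about trees. First, if $e$ is any edge of $T$, then $T-e$ has two components and each contains at least one leaf of $T$: if a component is a single vertex, that vertex had degree $1$ in $T$; otherwise the component is a tree on at least two vertices, so it has at least two leaves, at least one of which differs from the endpoint of $e$ and therefore retains degree $1$ in $T$. Second, I would record the additivity of distances across a cut: writing $e=ab$ and letting $T_a,T_b$ be the components of $T-e$ containing $a,b$, for any leaf $\ell\in T_b$ and any vertex $x\in T_a$ the $\ell$–$x$ path crosses $e$, so $d(\ell,x)=d(\ell,b)+1+d(a,x)$ and consequently $d(\ell,e)=\min\{d(\ell,a),d(\ell,b)\}=d(\ell,b)$.

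With these tools in hand I would distinguish an arbitrary pair of distinct elements by one uniform strategy, namely cut them apart and pick a leaf on the far side. For two vertices $u\ne v$, I would delete the first edge of the $u$–$v$ path incident to $u$ and take a leaf $\ell$ in the component of $u$; then $d(\ell,v)=d(\ell,u)+d(u,v)>d(\ell,u)$. For two edges $e_1=a_1b_1\ne e_2$, I would delete $e_1$; since $e_2$ survives it lies wholly in one component, so a leaf $\ell$ in the other component satisfies $d(\ell,e_1)=d(\ell,b_1)<d(\ell,b_1)+1+d(a_1,e_2)=d(\ell,e_2)$. For the genuinely mixed case of a vertex $v$ and an edge $e=ab$, I would delete $e$; the vertex $v$ lies in one component, say $T_a$, and a leaf $\ell$ in $T_b$ gives $d(\ell,e)=d(\ell,b)<d(\ell,b)+1+d(a,v)=d(\ell,v)$. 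In every case the chosen leaf distinguishes the two elements, so $L$ is a mixed metric generator.

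The vertex–edge case is the part I expect to be the main obstacle, since it is the only genuinely new ingredient beyond the vertex–vertex and edge–edge comparisons (which simply re-establish that $L$ is a metric and an edge metric generator). The delicate points there are checking that the far component really does contain a leaf of $T$ and that the cut-additivity formula is applied with the correct near and far endpoints; once the additivity observation is stated cleanly, all three cases reduce to the same one-line strict distance comparison.
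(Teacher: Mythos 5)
Your proposal is correct, and for the upper bound it takes a genuinely different route from the paper. The paper also lower-bounds via Corollary \ref{leaves} and shows the set of all leaves is a mixed metric generator, but it dispatches the vertex--vertex and edge--edge pairs by citing \cite{landmarks} and \cite{edge-dim} (there exist a metric basis and an edge metric basis consisting of leaves), and for the mixed vertex--edge pair $x=x_1x_2$, $y$ it chooses two leaves $x',y'$ with $x_1,x_2,y$ on the $x'$--$y'$ path and argues (somewhat sketchily, ``it is easy to see'') that at least one of the two distinguishes the pair. You instead prove everything from scratch with one uniform mechanism: delete a suitable edge, note that each component of $T-e$ contains a leaf of $T$, and use the exact additivity of distances across the cut, $d(\ell,x)=d(\ell,b)+1+d(a,x)$ together with $d(\ell,e)=d(\ell,b)$, to get a strict inequality in all three cases. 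In the mixed case your leaf on the component of $e$ away from $v$ is essentially the paper's leaf $x'$, so you have in effect identified the single leaf that always works rather than arguing one of two candidates does. What your approach buys is self-containedness (no reliance on the external results about bases of leaves) and full rigor in the mixed case, including the boundary situations $v\in\{a,b\}$, which your strict ``$+1$'' handles automatically; what the paper's approach buys is brevity by reusing known theorems. Your case analysis is sound throughout: in the vertex--vertex case the identity $d(\ell,v)=d(\ell,u)+d(u,v)$ follows because the deleted edge is the first edge of the $u$--$v$ path, and in the edge--edge case both endpoints of $e_2$ lie in one component so $d(\ell,e_2)=d(\ell,b_1)+1+d(a_1,e_2)$, as you state. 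No gap.
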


\begin{proof}
Let $S$ be the set of all leaves of $T$ and let $x,y$ be any two distinct elements of $T$. From \cite{landmarks} and \cite{edge-dim} is known that there are a metric basis and an edge metric basis which are both subsets of leaves in $T$. Thus, if $x,y$ are either two vertices or two edges, then they are distinguished by $S$, which is formed by all leaves of $T$. Now, assume $x=x_1x_2$ is an edge and $y$ is a vertex. Without loss of generality, we consider there is an $x_1-y$ path containing $x_2$ (notice that it could happen $y=x_2$). Now, let $x'$ and $y'$ be two leaves of $T$ such that $x_1,x_2,y$ lie in the $x'-y'$ path (notice that it could be $x'=x_1$ and $y'=y$ or viceversa). Thus, it is easy to see that at least one of the leaves $x'$ or $y'$ distinguishes $x$ and $y$. The case when only one of these two leaves distinguishes $x$ and $y$ is given whether $x_2=y$. Therefore, $S$ is a mixed metric generator and we have that $\mdim(T)\le l(T)$.  On the other hand, since every leaf of $T$ is of degree 1 from Corollary \ref{leaves}, we obtain that $\mdim(T)\ge l(T)$, which completes the proof.
\end{proof}

The Cartesian product of two graphs $G$ and $H$ is the graph $G\Box H$, such that $V(G\Box H)=\{(a,b)\;:\;a\in V(G),\;b\in V(H)\}$ and two vertices $(a,b)$ and $(c,d)$ are adjacent in $G\Box H$ if and only if, either ($a=c$ and $bd\in E(H)$), or ($b=d$ and $ac\in E(G)$). Let $h \in V(H)$. We refer to the set $V(G)\times \{h\}$ as a $G$-layer. Similarly $\{g\} \times V(H)$, $g \in V(G)$ is an $H$-layer. When referring to a specific $G$ or $H$ layer, we denote them by $G^h$ or $^gH$, respectively. Obviously, the subgraph induced by a $G$-layer or by an $H$-layer is isomorphic to $G$ or $H$, respectively. Next we give the value of the mixed metric dimension of the grid graph, which is the Cartesian product of two paths $P_r$ and $P_t$ with $r$ and $t$ vertices, respectively.

\begin{proposition}
Let $G$ be the grid graph $G=P_r\Box P_t$, with $r\geq t\geq 2$. Then $\mdim(G)=3$.
\end{proposition}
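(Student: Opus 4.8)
The plan is to establish the two bounds separately. For the lower bound I would note that $G=P_r\Box P_t$ with $r\ge t\ge 2$ always contains a $4$-cycle and hence is not a path; by Theorem~\ref{path-complete} this already forces $\mdim(G)\ge 3$, since among the graphs allowed by Remark~\ref{trivial-bounds} only paths attain the value $2$.

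For the upper bound I would exhibit an explicit mixed metric generator of size three and verify it directly. Label the vertices by coordinates $(i,j)$ with $1\le i\le r$ and $1\le j\le t$, so that $d((i,j),(i',j'))=|i-i'|+|j-j'|$, and take $S=\{a,b,c\}$ with $a=(1,1)$, $b=(r,1)$, $c=(1,t)$, i.e.\ three corners of the grid. The central computation is the distance vector $(d(\cdot,a),d(\cdot,b),d(\cdot,c))$ of each element, where for an edge the distance to a corner is realized at the endpoint nearer to that corner. One finds that a vertex has vector $(i+j-2,\,r-i+j-1,\,i-j+t-1)$, a horizontal edge joining $(i,j)$ and $(i+1,j)$ has vector $(i+j-2,\,r-i+j-2,\,i-j+t-1)$, and a vertical edge joining $(i,j)$ and $(i,j+1)$ has vector $(i+j-2,\,r-i+j-1,\,i-j+t-2)$.

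The main idea, which disposes of all cross-type comparisons at once, is a parity argument. Writing a vector as $(\alpha,\beta,\gamma)$, I would check modulo $2$ that $\alpha+\beta\equiv r-1$ and $\alpha+\gamma\equiv t-1$ for vertices, that $\alpha+\beta\equiv r$ and $\alpha+\gamma\equiv t-1$ for horizontal edges, and that $\alpha+\beta\equiv r-1$ and $\alpha+\gamma\equiv t$ for vertical edges. Since the three residue pairs $(r-1,t-1)$, $(r,t-1)$, $(r-1,t)$ are pairwise distinct modulo $2$, no vertex can share its distance vector with an edge, and no horizontal edge can share it with a vertical edge. It then remains only to distinguish two elements of the same type, and this follows from injectivity: from $\alpha$ and $\beta$ one linearly recovers $i$ and $j$ for a vertex (so in fact $\{a,b\}$ already resolves all vertices), and in the same way $(i,j)$ is recoverable from the vector of a horizontal edge and of a vertical edge. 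Hence every pair of elements of $G$ is distinguished by $S$, giving $\mdim(G)\le 3$.

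I expect the only delicate point to be the bookkeeping of which endpoint realizes each minimum distance when writing down the three edge vectors; once these are correct, the parity separation of the three element-types and the linear recovery of coordinates within each type are routine. Combining the two bounds yields $\mdim(G)=3$.
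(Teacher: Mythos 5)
Your proof is correct and uses the same three-corner generator and the same lower-bound argument (the grid is not a path, so Theorem \ref{path-complete} forces $\mdim(G)\ge 3$) as the paper; where you genuinely diverge is in the verification of the upper bound. The paper disposes of vertex--vertex and edge--edge pairs by citing that two corners already form a metric generator \cite{landmarks} and an edge metric generator \cite{edge-dim} for the grid, and then handles the remaining vertex--edge pairs by writing the equal-distance conditions as a system of three linear equations and checking it is inconsistent. You instead make everything self-contained: you compute all three distance vectors explicitly and extract a parity invariant, namely that $(\alpha+\beta,\alpha+\gamma)$ modulo $2$ equals $(r-1,t-1)$ on vertices, $(r,t-1)$ on horizontal edges and $(r-1,t)$ on vertical edges, which settles every cross-type pair at once --- including horizontal versus vertical edges, which the paper delegates to \cite{edge-dim} --- while within a type $(\alpha,\beta)$ recovers $(i,j)$ linearly. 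Your formulas check out (for a vertex $\alpha+\beta=r+2j-3\equiv r-1$ and $\alpha+\gamma=2i+t-3\equiv t-1 \pmod 2$, with a horizontal edge shifting the first residue and a vertical edge the second, and you identified correctly which endpoint realizes each minimum). It is worth noting that the paper's inconsistent system conceals the same parity phenomenon: adding its first two equations yields $2i-2k=-1$, an even number equal to an odd one. So your argument makes the underlying mechanism explicit and buys independence from the cited generator results for the grid, at the cost of slightly more bookkeeping; both routes are sound.
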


\begin{proof}
In order to simplify the procedure, we shall embed $G$ into $\ZZ^2$. That is, each vertex of the grid is represented as an ordered pair of coordinates $(x,y)$. In this sense, $G$ is embedded into $\ZZ^2$ where $(0,0),(r-1,0),(0,t-1), (r-1,t-1)$ are the corner vertices of $G$ (the vertices of degree two). We shall prove that the set $S=\{(0,0), (0,t-1), (r-1,0)\}$ is a mixed metric generator for the grid $G$. Consider any two different elements $x,y$ of $G$.

\noindent Case 1: $x,y$ are vertices. From \cite{landmarks} we know that $S'=\{(0,0), (0,t-1)\}$ is a metric generator for $G$. Thus, $x$ and $y$ are distinguished by $(0,0)$ or by $(0,t-1)$. Notice that also  $S=\{(0,0), (r-1,0)$ is a metric generator for $G$.

\noindent Case 2: $x,y$ are edges. From \cite{edge-dim} we know that $S'=\{(0,0), (0,t-1)\}$ or $S=\{(0,0), (r-1,0)$ are edge metric generators for $G$ and we are done for this case.

\noindent Case 3: $x$ is a vertex and $y$ is an edge, say $x=(i,j)$ and $y=(k,a)(k,b)$ (notice that vertices of any edge have either equal first components or equal second components). Without loss of generality we assume $a<b$ (which means $b=a+1$). Suppose the vertex $x$ and the edge $y$ are not distinguished by $S$. This means the following.
\[i+j=d(x,(0,0))=d(y,(0,0))=k+a,\]
\[i+t-1-j=d(x,(0,t-1))=d(y,(0,t-1))=k+t-1-b=k+t-2-a,\]
\[j+r-1-i=d(x,(r-1,0))=d(y,(r-1,0))=a+r-1-k.\]
Thus, we obtain the following system of equations
\begin{align*}
i+j-k-a&=0\\
i-j-k+a&=-1\\
-i+j+k-a&=0
\end{align*}
which is straightforward to observe to be a not compatible system of linear equation, a contradiction. An analogous procedure gives a similar contradiction in the case $x=(i,j)$ and $y=(a,k)(b,k)$. Thus, at least one of the vertices in $S$ identifies the pair $x,y$. As a consequence, $S$ is a mixed metric generator of cardinality three. Therefore, by using Theorem \ref{path-complete} we complete the proof.
\end{proof}

\section{An Upper Bound for the Mixed Metric Dimension of Graphs}\label{sec::bound}
The \emph{girth} $g(G)$ of $G$ is the order of the smallest cycle in $G$. We can give an upper bound for $\mdim(G)$ in terms of the girth of the graph.

\begin{theorem}\label{girth}
Let $G$ be a graph of order $n$. If $G$ has a cycle, then $\mdim(G)\le n-g(G)+3$.
\end{theorem}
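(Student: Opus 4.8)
The plan is to locate a shortest cycle $C=u_0u_1\cdots u_{g-1}u_0$ of length $g=g(G)$ and to build a mixed metric generator by deleting from $V(G)$ all but three vertices of $C$. Concretely, set $c=\lceil g/2\rceil$ and $R=V(C)\setminus\{u_0,u_1,u_c\}$, and take $S=V(G)\setminus R$; then $|S|=n-(g-3)=n-g+3$, so it suffices to prove that $S$ is a mixed metric generator. Two structural facts about a shortest cycle will be used repeatedly: $C$ is chordless (a chord would yield a shorter cycle) and $C$ is isometric in $G$, i.e. $d_G(u_i,u_j)=\min\{|i-j|,g-|i-j|\}$ for all $i,j$ (otherwise a shortest detour off $C$ together with an arc of $C$ produces a closed walk of length $<g$, hence a cycle shorter than $g$). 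A further consequence of the girth that I will invoke often: for $g\ge 5$ no vertex outside $C$ can be adjacent to two vertices of $C$, since two such neighbours at cyclic distance $d$ would create a cycle of length $2+d\le 2+\lfloor g/2\rfloor<g$; the cases $g\in\{3,4\}$ are small and direct.

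I would then verify that $S$ distinguishes every pair of elements by sorting the pairs according to how they meet $C$. First, pairs both of whose elements lie on $C$ (two cycle vertices, two cycle edges, or a cycle vertex and a cycle edge) are handled for free: by isometry the distance vectors in $G$ of these elements with respect to $\{u_0,u_1,u_c\}$ coincide with their distance vectors inside the abstract cycle $C_g$, and $\{u_0,u_1,u_c\}$ is exactly the mixed metric generator exhibited in Proposition \ref{cycle}. Second, any pair containing an element off $C$ is attacked through the ``distance zero'' observation: a vertex $w\notin V(C)$ lies in $S$ and satisfies $d(w,w)=0<d(w,x)$ for every other element $x$, which immediately distinguishes $w$ from any other vertex and, since $d(w,e)=0$ exactly when $w$ is an endpoint of $e$, from every edge not incident to $w$. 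Using this, two distinct edges fail to be separated by $S$ only if they share an endpoint $a\in S$ while their two remaining endpoints both lie in $R\subseteq V(C)$; but then $a$ has two neighbours in $V(C)$, which forces either a chord or (if $a\notin V(C)$) a short cycle, contradicting the facts above.

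The genuinely delicate case, and the one I expect to be the main obstacle, is a vertex--edge pair in which the vertex $p$ lies off $C$ and is an endpoint of the edge; writing the edge as $pz$, the ``distance zero'' trick forces $z\in R$, say $z=u_m$, and non-separation becomes the condition $d(s,p)\le d(s,u_m)$ for every $s\in S$. To contradict this I would take the two kept cycle vertices $u_a,u_b$ that bracket the gap of $R$ containing $u_m$, so that $u_m$ lies on a shortest $u_a$--$u_b$ arc $A$ of $C$ and $d_G(u_a,u_b)=d(u_a,u_m)+d(u_m,u_b)$. The non-separation condition yields a $u_a$--$u_b$ path $B$ through $p$ of length at most $d_G(u_a,u_b)$ that avoids $u_m$ (a shortest $u_a$--$p$ path cannot pass through $u_m$, since routing through $u_m$ would be strictly longer). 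As $d_G(u_a,u_b)\le g/2$, the union $A\cup B$ has at most $g$ edges; comparing the two distinct paths via their symmetric difference produces a cycle of length $\le g$, and because $p\in B$ while $u_m\in A$ are non-adjacent on this cycle yet adjacent in $G$, the edge $pu_m$ is a chord that splits it into two strictly shorter cycles, contradicting the girth. The one place where care is needed is when $A\cup B$ has length exactly $g$ (this happens only for even $g$ when $\{u_a,u_b\}=\{u_0,u_c\}$ are antipodal): there the symmetric difference is forced to be a single $g$-cycle, so $A$ and $B$ are internally disjoint and the chord argument applies cleanly. The remaining vertex--edge configurations (the vertex in $S$ but not on the edge, or the vertex a removed cycle vertex while the edge has an endpoint off $C$) are again disposed of by the ``distance zero'' observation, completing the proof that $\mdim(G)\le |S|=n-g(G)+3$.
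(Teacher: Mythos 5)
Your proposal is correct and takes essentially the same approach as the paper: you use the identical generator $S=\bigl(V(G)\setminus V(C)\bigr)\cup\{u_0,u_1,u_{\lceil g/2\rceil}\}$, delegate the on-cycle pairs to Proposition \ref{cycle} (with the isometry of a shortest cycle made explicit), dispose of pairs involving off-cycle elements via the zero-distance endpoint observation, and derive a girth contradiction in the hard vertex--edge case. The only divergence is cosmetic, in the endgame of that hard case: the paper routes through the single landmark nearest to $u_m$ and closes a cycle of length at most $g/2+1<g$ using the edge $pu_m$ itself, whereas you use the two landmarks bracketing the gap and a symmetric-difference/chord argument, which amounts to the same idea while treating the tight even-antipodal boundary case more explicitly.
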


\begin{proof}
Let $C=v_0v_1\ldots v_{r-1}v_0$ be a cycle of order $r=g(G)$ in the graph $G$. We claim that $S = V(G) - V(C) \cup \{v_0, v_1, v_{\left\lceil\frac{r}{2}\right\rceil}\}\}$ is a mixed metric generator.

Let $x, y \in V(G)$ be two arbitrary distinct vertices. If at least one of them, say $x$, is in $S$, then they are clearly distinguished by $x$, since $0=d(x,x)\not = d(x,y) > 0$. If none of them is in $S$, then they are vertices of the cycle $C$ and are by Proposition \ref{cycle} distinguished by at least one of $\{v_0, v_1, v_{\left\lceil\frac{r}{2}\right\rceil}\}\}$. Therefore, $S$ is a metric generator.

Now, let $e,f \in E(G)$ be two distinct edges of $G$. If at least one of them, say $e$, has both end-vertices in $S$, then they are clearly distinguished by at least one end-vertex of $e$. Suppose now, that $e=uv$, with $u\in S$ and $v \in V(G)-S$. If $e$ and $f$ are disjoint or their common end-vertex is $v$, then they are distinguished by $u$.  If $e=uv$ and $f=uv'$ and $v, v' \in V(C)$, then the vertex that distinguishes $v$ and $v'$ also distinguishes $e$ and $f$. The remaining case, where $e$ and $f$ have no end-vertices in $S$ is covered by Proposition \ref{cycle}. It follows that $S$ is an edge metric generator.

To conclude the proof we need to prove that any vertex and any edge are distinguished by at least one vertex of $S$. Towards contradiction suppose that there exist $e \in E(G)$ and $v \in V(G)$ that are not distinguished by any vertex of $S$; in other words $\forall x\in S: d(e,x) = d(v, x)$.
Suppose both end-vertices of $e=xy$ are in $S$ (note that it could happen that $v\in\{x,y\}$). Then $e$ and $v$ are distinguished by the  endpoint of $e$ that is not $v$, a contradiction. Suppose that both end-vertices of $e=xy$ are in $V(G)-S$ (again, it could be that $v\in\{x,y\}$). If $v\in S$, then $e$ and $v$ are distinguished by $v$, a contradiction. The case where $v \not \in S$ is covered by the fact that $C$ is a smallest cycle in $G$ and Proposition \ref{cycle}, again a contradiction.
The remaining case is where $e=xy$, with $x\in S$ and $y \in V(G)-S$. If $v$ is not an end-vertex of $e$ or $v=y$, then $e$ and $v$ are distinguished by $x$, a contradiction. Finally, say $v=x$. If $x \in V(C)$, again, since $C$ is a smallest cycle in $G$ at least one vertex of $\{v_0, v_1, v_{\left\lceil\frac{r}{2}\right\rceil}\}$ distinguishes the edge $e$ and vertex $v$ by Proposition \ref{cycle}, a contradiction. Therefore, $x \notin V(C)$.
Let $v' \in  \{v_0, v_1, v_{\left\lceil\frac{r}{2}\right\rceil}\}$ be a vertex closest to $y$. Then $d(e,v') \leq d(y, v') \leq \frac{r}{4}$. On the other hand, since $v'\in S$ by assumption $d(v,v') = d(e,v') \leq d(y,v') \leq \frac{r}{4}$. Let $P_{v',y}$ be the shortest path in $C$ from $v'$ to $y$. Let $P_{v',v}$ be the shortest path in $G$ from $v'$ to $v$. But then the subgraph of $G$ induced by vertices of $P_{v',y}$ and $P_{v',v}$ admits a cycle of size at most $d(v,v')+d(y,v')+d(y,v) \leq \frac{r}{4}+\frac{r}{4}+1 = \frac{r}{2}+1 < r$  (the case where the two paths $P_{v',y}$ and $P_{v',v}$ have no internal vertices in common; otherwise the cycle in question is even smaller), a contradiction with the fact that $r$ is the girth of the graph $G$. Since we obtained a contradiction in all cases, it follows that any vertex and any edge are distinguished by at least one vertex of $S$.

Combining all of the above it follows that $S$ is a mixed metric generator and the proof is completed.
\end{proof}

Clearly the bound from Theorem \ref{girth} is sharp as the following examples show. For any cycle $C_n$, $\mdim(C_n)=n-g(C_n)+3=3$. For any complete graph $\mdim(K_n)=n-g(K_n)+3=n$. For any complete bipartite graph $K_{2,t}$ we have $\mdim(K_{2,t})=t+2-g(K_{2,t})+3=t+1$. For any graph $G$ such that every vertex has a maximal neighbour the girth is $g(G)=3$, therefore by Theorem \ref{includingAll}, $\mdim(G)=n-g(G)+3$.

\section{The Complexity of the Mixed Metric Dimension Problem}\label{sec::complex}

Due to the close relationship between the mixed metric dimension, edge metric dimension and the standard metric dimension, it is natural to think how computationally difficult the problem of computing the mixed metric dimension of a graph is. The decision problems concerning the metric dimension and the edge metric dimension of a graph are already known as NP-complete problems. The proofs are presented in the book \cite{garey} (a formal proof of it appeared in \cite{landmarks}) and in \cite{edge-dim}, respectively.
Let us take a look if the decision problem for the mixed metric dimension is also NP-complete.
We will use a reduction from the 3-SAT problem, as in the case of the metric dimension proof in \cite{landmarks} and edge metric dimension proof in \cite{edge-dim} with slight improvements to the gadgets in construction. From now on, in this section we show that the problem of finding the mixed metric dimension of an arbitrary connected graph is NP-hard. We first deal with the following decision problem.

\[\begin{tabular}{|l|}
  \hline
  \mbox{MIXED METRIC DIMENSION PROBLEM (MDIM problem for short)}\\
  \mbox{INSTANCE: A connected graph $G$ of order $n\ge 3$ and an integer $2\le r\le n$.}\\
  \mbox{QUESTION: Is $\mdim(G)\le r$?}\\
  \hline
\end{tabular}\]\\
To study the complexity of the problem above we make a reduction from the 3-SAT problem, which is one of the most classical problems known to be NP-complete. For more information on this problem, and NP-completeness reductions in general, we suggest \cite{garey}.

\begin{theorem}\label{theorem:NP-complete}
The MDIM problem is NP-complete.
\end{theorem}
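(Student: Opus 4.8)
The plan is to prove both halves of the claim: that MDIM lies in NP, and that it is NP-hard via a polynomial reduction from 3-SAT. Together these give NP-completeness.

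First I would dispatch membership in NP. A natural certificate for a ``yes'' instance is a candidate set $S\subseteq V(G)$ with $|S|\le r$. Given $S$, one computes the full vertex-to-vertex distance matrix by all-pairs shortest paths in polynomial time, extends it to vertex-to-edge distances through $d_G(e,w)=\min\{d_G(a,w),d_G(b,w)\}$ for each edge $e=ab$, and then, for each of the $O((n+m)^2)$ unordered pairs of elements $x,y\in V\cup E$, checks whether some $w\in S$ satisfies $d_G(x,w)\ne d_G(y,w)$. This verification runs in polynomial time, so $S$ is a valid certificate exactly when it is a mixed metric generator of size at most $r$; hence MDIM is in NP.

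Second, I would prove NP-hardness by a polynomial reduction from 3-SAT, following the template used for $\dim$ in \cite{landmarks} and for $\edim$ in \cite{edge-dim}, but with the variable- and clause-gadgets adjusted to account for the extra vertex--edge pairs that the mixed setting must distinguish. Given a 3-SAT instance with variables $x_1,\dots,x_p$ and clauses $c_1,\dots,c_q$, I would build a graph $G_\phi$ as follows. For each variable $x_i$ I attach a gadget containing two designated ``literal'' vertices, one for $x_i$ and one for $\overline{x_i}$, together with a small number of pendant or twin vertices whose only role is to force membership in every mixed metric generator via Corollary \ref{leaves} and Propositions \ref{true-twins}--\ref{extremes}. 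For each clause $c_j$ I create a clause vertex joined to the three literal vertices occurring in $c_j$. The forced vertices supplied by all the gadgets form a fixed set $B$ contained in every mixed metric generator, and the threshold is set to $r=|B|+p$, so that a generator of size $r$ can afford exactly one extra vertex per variable gadget.

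The heart of the reduction is the equivalence $\phi$ is satisfiable if and only if $\mdim(G_\phi)\le r$. For the forward direction, from a satisfying assignment I would take $S=B$ together with, for each $i$, the literal vertex made true by the assignment, and then verify that $S$ distinguishes every pair of elements; the delicate cases are a clause vertex against one of its incident edges, and two edges incident to the same clause vertex, which get resolved precisely because a chosen true literal sits at a controlled distance from the clause gadget. For the reverse direction, given any mixed metric generator of size at most $r$, the forced vertices already consume $|B|$ of the budget, leaving at most $p$ free vertices; a counting argument forces at least one literal vertex to be selected in each variable gadget, and I would read off a truth assignment from this choice, arguing that if some clause vertex had none of its literals selected then that clause vertex together with one of its incident edges would be indistinguishable, contradicting the generator property. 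The hard part will be exactly this bookkeeping for edges: in the mixed setting one must certify that the gadget distances make every vertex--edge and edge--edge pair, both within and across gadgets, resolvable under the ``one literal per variable'' regime, and simultaneously that no generator can beat the budget $r$ by exploiting an edge that a purely vertex-based argument would overlook. This is where the slight improvements to the gadgets are needed, and verifying it reduces to a finite but careful case analysis over the gadget geometry.
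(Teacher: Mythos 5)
Your NP-membership argument is complete and coincides with the paper's. The hardness half, however, is a template rather than a proof: you never specify the gadgets, and you explicitly defer ``a finite but careful case analysis over the gadget geometry'' --- but that case analysis \emph{is} the mathematical content of the theorem. Two concrete ideas are missing, and both are places where the sketch as written would fail. First, your reverse direction rests on ``a counting argument forces at least one literal vertex to be selected in each variable gadget,'' but forcing the fixed set $B$ via leaves and twins (Corollary \ref{leaves}, Propositions \ref{true-twins}--\ref{extremes}) yields no such per-gadget disjunctive constraint: a generator of size $r=|B|+p$ could spend its $p$ free vertices anywhere. What is needed is a lemma exhibiting, inside each variable gadget, a pair of elements distinguishable \emph{only} from within that gadget; this is exactly the paper's Proposition \ref{remark:NP1}, which forces one of $\{a_i,b_i\}$ because the vertex $c_i$ and the edge $a_ic_i$ are symmetric with respect to the two attachment vertices $T_i,F_i$ through which the gadget meets the rest of $G$. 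Your proposed gadgets, built from pendants and twins, do not obviously admit such a pair, and you give none.

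Second, the other direction of the equivalence requires that when clause $c_j$ is \emph{not} satisfied, no vertex of the generator --- in particular none of the many forced vertices scattered through the other gadgets --- accidentally distinguishes the critical pair at $c_j$. This is a global distance condition, and the paper has to engineer it explicitly: \emph{neutralizing} edges $T_kc_j^2$ for every variable $u_k$ not occurring in $c_j$ (so that either choice in an irrelevant truth-setting component yields equal distances to the critical edges $c_j^1c_j^2$ and $c_j^2c_j^4$), and \emph{correcting} edges $c_j^2c_k^2$ between clause components (so that vertices forced in other clause gadgets, such as $c_k^5,c_k^6$, are equidistant from the critical pair). Your construction --- a clause vertex joined to its three literal vertices, with no analog of these edges --- has no mechanism preventing a far-away forced pendant from separating your claimed indistinguishable pair (a clause vertex and an incident edge), in which case $\mdim(G_\phi)\le r$ could hold for unsatisfiable $\phi$ and the reduction breaks. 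The paper discharges all of this by explicit distance computations in Lemmas \ref{lema:NP1} and \ref{lema:NP2} (e.g., $d(a_i,c_j^1c_j^2)=3$ versus $d(a_i,c_j^2c_j^4)=4$ when $u_i$ satisfies $c_j$); until your gadgets are fixed and the analogous computations carried out, the equivalence ``$\phi$ satisfiable iff $\mdim(G_\phi)\le r$'' is asserted, not proved.
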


\begin{proof}
First let us show that MDIM is in NP. For a set of vertices $S$ guessed by a non-deterministic algorithm for the problem, one needs to check that this is a mixed metric generator. This can be checked in polynomial time. One has to compute the distances from vertices of $S$ to all elements (edges and vertices) and check that all pairs of these elements have different distance vectors with respect to the set $S$.

We now describe a polynomial transformation of the 3-SAT problem to the MDIM problem. Consider an arbitrary input of the 3-SAT problem, a collection $C=\{c_1, c_2, \ldots, c_m\}$ of clauses over a finite set $U=\{u_1,u_2,\ldots,u_n\}$ of Boolean variables. We shall construct a connected graph $G=(V,E)$ and set a positive integer $r \leq |V|$  such that the graph $G$ has a mixed metric generator of size at most $r$ if and only if $C$ is satisfiable. The construction will be made up of several components augmented by some additional edges for communicating between various components.

For each variable $u_i \in U$ we construct a truth-setting component $X_i=(V_i,E_i)$, with $V_i=\{T_i,F_i,a_i,b_i,c_i, d_i\}$ and $E_i=\{T_ic_i,a_ic_i,a_ib_i,b_id_i,c_id_i,d_iF_i\}$ (see Figure \ref{figure:NP1} for reference).
The vertices $T_i$ and $F_i$ are the \texttt{TRUE} and \texttt{FALSE} ends of the component, respectively. Each component is connected with the rest of the graph only through these two vertices which gives us the following proposition.

\begin{proposition}\label{remark:NP1}
Let $u_i$ be an arbitrary variable in $U$. Any mixed metric generator must contain at least one vertex from the set $\{a_i,b_i\}$.
\end{proposition}

\begin{proof}
Suppose that there exists an edge metric generator $S$ without any of these vertices in it. Since the component $X_i$ is attached to the rest of the graph only through the vertices $T_i$ and $F_i$, due to the symmetry, this implies that the vertex $c_i$ and edge $a_ic_i$ have the same distances to all vertices in the set $S$, a contradiction.
\end{proof}

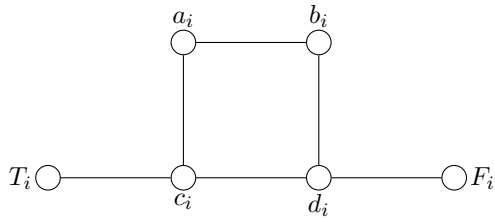
\begin{figure}[!ht]
\centering
\begin{tikzpicture}[scale=0.9, transform shape]

\node [draw, shape=circle] (a1) at (-6,6) {};
\node [draw, shape=circle] (a2) at (-4,6) {};
\node [draw, shape=circle] (a3) at (-2,6) {};
\node [draw, shape=circle] (a4) at (0,6) {};
\node [draw, shape=circle] (a5) at (-2,8) {};
\node [draw, shape=circle] (a6) at (-4,8) {};

\draw (-6.1,6) node[left] {$T_i$};
\draw (0.1,6) node[right] {$F_i$};
\draw (-2,5.9) node[below] {$d_i$};
\draw (-4,5.9) node[below] {$c_i$};
\draw (-2,8.1) node[above] {$b_i$};
\draw (-4,8.1) node[above] {$a_i$};

\foreach \from/\to in {
 a1/a2, a2/a3, a3/a4, a2/a6, a3/a5, a5/a6}
\draw (\from) -- (\to);
\end{tikzpicture}
\caption{The truth-setting component for variable $u_i$.}\label{figure:NP1}
\end{figure}

Now, suppose that $c_j=y_j^1 \vee y_j^2 \vee y_j^3$, where $y_j^k$ is a literal in the clause $c_j$. For such clause $c_j$, we construct a satisfaction testing component $Y_j=(V_j',E_j')$, with $V_j'=\{c_j^1,\ldots, c_j^{6}\}$ and $E_j'=\{c_j^1c_j^2,c_j^2c_j^5,c_j^1c_j^3,c_j^2c_j^4,c_j^6c_j^3,c_j^3c_j^4\}$ (see Figure \ref{figure:NP2} for reference). The component is attached to the rest of the graph only through vertices $c_j^1$ and $c_j^2$ which gives us the following proposition.

\begin{proposition}\label{remark:NP2}
Let $c_j$ be an arbitrary clause in $C$. Any mixed metric generator must contain the vertices $c_j^5$ and $c_j^6$.
\end{proposition}

\begin{proof}
Suppose that there exists an edge metric generator $S$ without vertex $c_j^5$ in it. Since all the shortest paths from any vertex $x\ne c_j^5$ to the vertex $c_j^2$ and to the edge $c_j^2c_j^5$ go through the vertex $c_j^2$, this implies that the vertex $c_j^2$ and the edge $c_j^2c_j^5$ have the same distance to all vertices in the set $S$, a contradiction. A similar argument applies for the vertex $c_j^6$.
\end{proof}

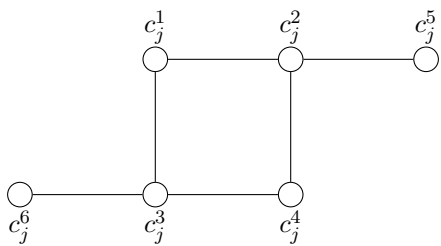
\begin{figure}[!ht]
\centering
\begin{tikzpicture}[scale=.9, transform shape]

\node [draw, shape=circle] (a1) at (-1,0) {};
\node [draw, shape=circle] (a2) at (1,0) {};
\node [draw, shape=circle] (a3) at (-1,-2) {};
\node [draw, shape=circle] (a4) at (1,-2) {};
\node [draw, shape=circle] (a5) at (3,0) {};
\node [draw, shape=circle] (a6) at (-3,-2) {};

\draw (-1,0.1) node[above] {$c_j^1$};
\draw (1,0.1) node[above] {$c_j^2$};
\draw (-1,-2.1) node[below] {$c_j^3$};
\draw (1,-2.1) node[below] {$c_j^4$};
\draw (3,0.1) node[above] {$c_j^5$};
\draw (-3,-2.1) node[below] {$c_j^6$};


\foreach \from/\to in {
 a1/a2, a1/a3, a2/a4, a2/a5, a3/a4, a3/a6}
\draw (\from) -- (\to);
\end{tikzpicture}
\caption{The satisfaction testing component for clause $c_j$.}\label{figure:NP2}
\end{figure}

We also add some edges between truth-setting and satisfaction testing components as follows. If a variable $u_i$ occurs as a positive literal in a clause $c_j$, then we add the edges $T_ic_j^1$ and $F_ic_j^2$. If a variable $u_i$ occurs as a negative literal in a clause $c_j$, then we add the edges $T_ic_j^2$ and $F_ic_j^1$. For each clause $c_j \in C$ denote those six added edges with $E_j''$. We call them \textit{communication} edges. Figure \ref{figure:NP3} shows the edges that were added corresponding to the clause $c_j= (u_1 \vee \overline{u_2} \vee u_3)$, where $\overline{u_2}$ represents the negative literal corresponding to the variable $u_2$.

For all $k \in \{1, \ldots, n\}$ such that neither of $u_k$ and $\overline{u_k}$ occur in clause $c_j$, add the edges $T_kc_j^2$ to the graph $G$. For each clause $c_j \in C$ denote them with $E_j'''$. We call them \textit{neutralizing} edges, because no matter what value is assigned to the variable $u_k$ (or equivalently which vertex $v_k$ from the corresponding truth-setting component $X_k$ is chosen for a mixed metric generator), this gives the same distance from such $v_k$ to the edges $c_j^1c_j^2$ and $c_j^2c_j^4$ from the satisfaction testing component corresponding to the clause $c_j$. These two edges play an important role later in the proof.

Finally, for each clause $c_j$ and every $k \in \{1, \ldots, m\}, k \neq j$, add the edge $c_j^2c_k^2$ to the graph $G$ (if it does not exist). For each clause $c_j \in C$ denote them with $E_j''''$. These edges keep the graph to be connected. We call these edges \textit{correcting} edges.

\begin{figure}[!ht]
\centering
\begin{tikzpicture}[scale=.9, transform shape]

\node [draw, shape=circle] (a1) at (-1,2) {};
\node [draw, shape=circle] (a2) at (1,2) {};
\node [draw, shape=circle] (a3) at (-1,0) {};
\node [draw, shape=circle] (a4) at (1,0) {};
\node [draw, shape=circle] (a5) at (3,2) {};
\node [draw, shape=circle] (a6) at (-3,0) {};

\draw (-1,1.65) node[left] {$c_j^1$};
\draw (1,1.65) node[right] {$c_j^2$};


\node [draw, shape=circle] (a11) at (-6,6) {};
\node [draw, shape=circle] (a12) at (-5,6) {};
\node [draw, shape=circle] (a13) at (-4,6) {};
\node [draw, shape=circle] (a14) at (-3,6) {};
\node [draw, shape=circle] (a15) at (-4,7) {};
\node [draw, shape=circle] (a16) at (-5,7) {};

\draw (-6.1,6) node[left] {$T_1$};
\draw (-2.9,6) node[right] {$F_1$};

\node [draw, shape=circle] (a17) at (3,6) {};
\node [draw, shape=circle] (a18) at (4,6) {};
\node [draw, shape=circle] (a19) at (5,6) {};
\node [draw, shape=circle] (a20) at (6,6) {};
\node [draw, shape=circle] (a21) at (5,7) {};
\node [draw, shape=circle] (a22) at (4,7) {};

\draw (2.9,6) node[left] {$T_3$};
\draw (6.1,6) node[right] {$F_3$};

\node [draw, shape=circle] (a23) at (-1.5,7) {};
\node [draw, shape=circle] (a24) at (-0.5,7) {};
\node [draw, shape=circle] (a25) at (0.5,7) {};
\node [draw, shape=circle] (a26) at (1.5,7) {};
\node [draw, shape=circle] (a27) at (0.5,8) {};
\node [draw, shape=circle] (a28) at (-0.5,8) {};

\draw (-1.6,7) node[left] {$T_2$};
\draw (1.6,7) node[right] {$F_2$};

\foreach \from/\to in {
 a1/a2, a1/a3, a2/a4, a2/a5, a3/a4, a3/a6,
 a11/a12, a12/a13, a13/a14, a13/a15, a15/a16, a16/a12,
 a17/a18, a18/a19, a19/a20, a19/a21, a21/a22, a22/a18,
 a23/a24, a24/a25, a25/a26, a25/a27, a27/a28, a28/a24,
 a11/a1, a14/a2, a17/a1, a20/a2, a23/a2, a26/a1}
\draw (\from) -- (\to);
\end{tikzpicture}
\caption{The subgraph associated to the clause $c_j=(u_1\vee \overline{u_2} \vee u_3)$.}\label{figure:NP3}
\end{figure}
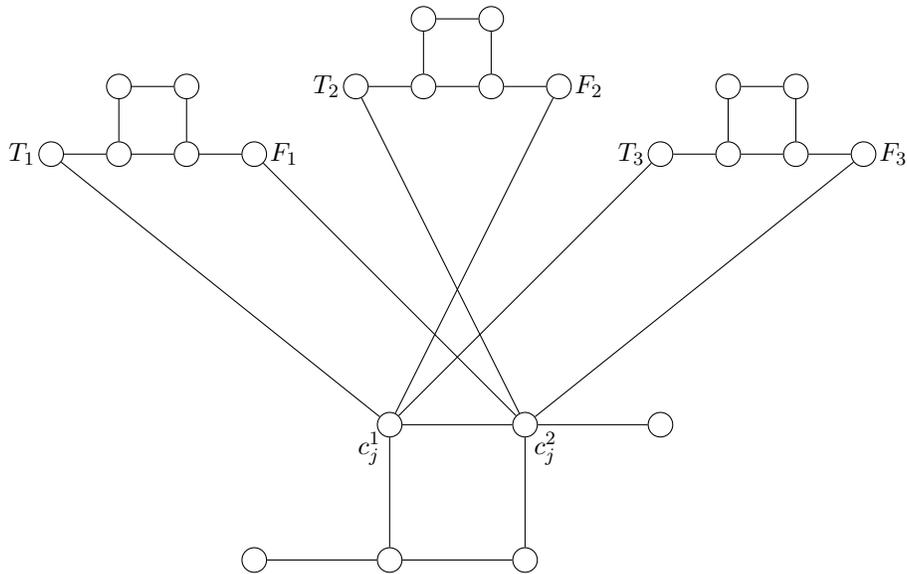

The construction of our instance of the MDIM problem is then completed by setting $r=2m+n$ and $G=(V,E)$, where
\[ V= \left( \bigcup_{i=1}^n V_i \right) \cup \left( \bigcup_{j=1}^m V_j' \right)\]
and
\[ E= \left( \bigcup_{i=1}^n E_i \right) \cup \left( \bigcup_{j=1}^m \left(E_j' \cup E_j'' \cup E_j''' \cup E_j'''' \right)\right).\]

It is not hard too see that the construction can be done in polynomial time. It remains to show that $C$ is satisfiable if and only if $G$ has a mixed metric generator of size $r$. From Propositions \ref{remark:NP1} and \ref{remark:NP2} we get the following.

\begin{corollary}
The mixed metric dimension of the graph $G$ is at least $r=2m+n$.
\end{corollary}

We now continue with the following lemmas which complete the proof of NP-completeness of MDIM problem.

\begin{lemma}\label{lema:NP1}
If $C$ is satisfiable, then the mixed metric dimension of the graph $G$ is $r$.
\end{lemma}

\begin{proof}
We know that the mixed metric dimension is at least $r$. We now construct a mixed metric generator $S$ of size $r$ based on a satisfying truth assignment for $C$.
Let $t:U \rightarrow \{\texttt{TRUE,FALSE}\}$ be a satisfying truth assignment for $C$. For each clause $c_j \in C$ put in the set $S$ vertices $c_j^5$ and $c_j^6$. For each variable $u_i \in U$ put in the set $S$ either the vertex $a_i$ if $t(u_i)=\texttt{TRUE}$, or the vertex $b_i$ if $t(u_i)=\texttt{FALSE}$.
We now show that $S$ is a mixed metric generator for the graph $G$.

Let $e_{j,k}$ be an arbitrary correcting edge between the satisfaction testing components $c_j$ and $c_k$. We notice that $e_{j,k}$ is uniquely determined by the set of vertices $\{c_j^5,c_k^5\}$, because this is the only element in the graph $G$ having distance 1 to both of the vertices $c_j^5$ and $c_k^5$.

Let $i \in \{1, \ldots,n\}$ and $j \in \{1, \ldots,m\}$ be arbitrary indices and let $v_i \in V_i \cap S$. Since we have already checked that any correcting edge is uniquely determined by some vertices in $S$, we do not have to check any pair of elements in which at least one correcting edge occurs.
Also, one can check that each communication edge and each neutralizing edge between a truth-setting component $X_i$ and a satisfaction testing component $Y_j$ is distinguished from all the remaining elements by the vertices $ v_i$, $c_j^5$ and $c_j^6$.

We next take a look at the elements in a truth-setting component. Let $i \in  \{1, \ldots,n\}$ be an arbitrary index and let $x \in V_i \cup E_i$ be an arbitrary element from $X_i$. Since we have already checked that all correcting, communication and neutralizing edges are distinguished from all other elements by some vertices from $S$ we only need to check that $x$ has different distance vectors: (1) from all other elements in $X_i$, (2) from all elements in other truth-setting components, and (3) from all elements in the satisfaction testing components. This is addressed next. (1) For checking that $x$ has different distance vectors to all other elements in $X_i$ suppose that $u_i$ or $\overline{u}_i$ is a literal in clause $c_j$. It is not difficult to check that the vertices $v_i$, $c_j^5$ and $c_j^6$ distinguish the element $x$ from all other elements in $X_i$.
For (2), let $k \in  \{1, \ldots,n\}, k\neq i$, be an arbitrary index. The vertex $v_i$ distinguishes the element $x$ from all elements $x' \in V_k \cup E_k$ (the elements in the truth-setting component $X_k$). For (3), let $j \in  \{1, \ldots,m\}$ be an arbitrary index. Hence, the vertices $c_j^5$ and $c_j^6$ distinguish element $x$ from all elements $y \in V_j' \cup E_j'$ (the elements in the satisfaction testing component $Y_j$).

Finally, we take a look at the elements from the satisfaction testing components.
Let $j \in  \{1, \ldots,m\}$ be an arbitrary index. Every element of $\{c_j^2,c_j^3,c_j^5,c_j^6,c_j^2c_j^5,c_j^3c_j^6\}$ and any other element not covered in previous cases are distinguished by the set of vertices $\{c_j^5,c_j^6\}$. Let $D_1=\{c_j^1c_j^2, c_j^2c_j^4\}$, $D_2=\{c_j^1c_j^3, c_j^3c_j^4\}$ and $D_3=\{c_j^1, c_j^4\}$. The set of vertices $\{c_j^5,c_j^6\}$ also distinguishes any pair of elements where one element is from $D_i$, for $i\in\{1,2,3\}$, and the other element is any element that has not been covered in previous cases and is not in $D_i$.

To complete the proof, we have to show that for any pair $(x,y)$, where $x \not= y$ and $x,y\in D_i$, for some $i\in\{1,2,3\}$  there exists a vertex in the set $S$ that distinguishes $x$ and $y$. Since $C$ is satisfiable, suppose that $c_j$ is satisfied by the variable $u_i$. For the variable $u_i$ there are two possibilities:
\begin{itemize}
\item $u_i$ occurs as a positive literal in $c_j$ and $t(u_i)=\texttt{TRUE}$,
\item $u_i$ occurs as a negative literal in $c_j$ and $t(u_i)=\texttt{FALSE}$.
\end{itemize}
Thus, if $t(u_i)=\texttt{TRUE}$, then we have added the vertex $a_i$ to the set $S$. In such case, the distance from $a_i$ to the edge $c_j^1c_j^2$ is 3, while the distance to the edge $c_j^2c_j^4$ is 4. Similarly, the distance from $a_i$ to the edge $c_j^1c_j^3$ is 3 and to the edge $c_j^3c_j^4$ is 4. The distance from $a_i$ to the vertex $c_j^1$ is 3 and to the vertex $v_j^4$ is 5. The case when $t(u_i)=\texttt{FALSE}$ is symmetric.

Therefore, any two elements of a graph $G$ are distinguished by at least one vertex from the set $S$, and as a consequence, $S$ is a mixed metric generator for a graph $G$, which completes the proof of this lemma.
\end{proof}

\begin{lemma}\label{lema:NP2}
If the mixed metric dimension of graph $G$ is $r$, then $C$ is satisfiable.
\end{lemma}

\begin{proof}
Let $S$ be an arbitrary mixed metric generator for graph $G$ with cardinality $r$. From Propositions \ref{remark:NP1} and \ref{remark:NP2}, the set $S$ must contain at least one vertex from the set $\{a_i,b_i\}$ for each truth-setting component $X_i$ and at least vertices $c_j^5,c_j^6$ from each satisfaction testing component $Y_j$. Since the cardinality of $S$ equals $r=2m+n$, it follows that in the set $S$ there is exactly one vertex from each truth-setting component and exactly two vertices from each satisfaction testing component.
We shall find a function $t: U \rightarrow \{\texttt{TRUE,FALSE}\}$ such that it represents a satisfying truth assignment for the collection of clauses $C$. For an arbitrary $i\in \{1, \ldots, n\}$, let $v_i \in V_i \cap S$. Hence, we define a function $t$ as follows:
\[t(u_i)=\left\{\begin{array}{ll}
                           \texttt{TRUE}, & v_i = a_i, \\
                           \texttt{FALSE}, & v_i = b_i.
                         \end{array}
\right.\]
We shall show that $t$ produces a satisfying truth assignment for $C$. To this end, let $c_j$ be an arbitrary clause. We claim that at least one of its literals has value \texttt{TRUE}. We prove that fact, by tracing which vertex from $S$ distinguishes the edges $e_j^1=c_j^1c_j^2$ and $e_j^2=c_j^2c_j^4$, and showing that the corresponding function $t$ satisfies $c_j$.

Let $k \in  \{1, \ldots,m\}$ be an arbitrary index. For the clause $c_k$ the vertices in the set $S$ are $c_k^5$ and $c_k^6$. If $j=k$, then both edges $e_j^1$ and $e_j^2$ are at distance 1 from $c_k^5$ and at distance 2 from $c_k^6$. If $j \neq k$, then by using the correcting edges, we deduce that the edges $e_j^1$ and $e_j^2$ are at distance 2 from $c_k^5$ and at distance 4 from $c_k^6$. Therefore, none of these vertices distinguish  $e_j^1$ from $e_j^2$.

Now, consider any variable $u_i$ which does not occur in $c_j$. If $v_i = a_i$, then both edges $e_j^1, e_j^2$ are at distance 3 from $v_i$. If $v_i = b_i$, then both edges are at distance 4 from $v_i$. Thus, the vertex of $S$ distinguishing the edges $e_j^1,e_j^2$ must belong to one of the truth-setting components that corresponds to a variable $u_k$ that occurs in the clause $c_j$. We recall that we have added communication edges in such a manner that $v_k$ distinguishes the edges $e_j^1$ and $e_j^2$ only if one of the following statements holds:
\begin{itemize}
\item $u_k$ occurs as a positive literal in $c_j$ and $v_k = a_k$ - in this case $t(u_k)=\texttt{TRUE}$,
\item $u_k$ occurs as a negative literal in $c_j$ and $v_k = b_k$ - in this case $t(u_k)=\texttt{FALSE}$.
\end{itemize}
In both cases the clause $c_j$ is satisfied by the setting assigned to the variable $u_k$. As a consequence, the formula $C$ is satisfiable, which completes the proof of this lemma.
\end{proof}

As a consequence of the Lemmas \ref{lema:NP1} and \ref{lema:NP2} above, the polynomial transformation from 3-SAT to the MDIM problem is done, and the proof of the theorem is now completed.
\end{proof}

As a consequence of Theorem \ref{theorem:NP-complete} we have the following result.

\begin{corollary}\label{np-hard}
The problem of finding the mixed metric dimension of a connected graph is NP-hard.
\end{corollary}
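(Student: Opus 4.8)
The plan is to obtain the NP-hardness of the optimization problem as an immediate consequence of the NP-completeness of its decision version, Theorem~\ref{theorem:NP-complete}. Recall that to call the problem of \emph{computing} $\mdim(G)$ NP-hard means precisely that a polynomial-time algorithm for it would entail a polynomial-time algorithm for every problem in NP; thus it suffices to exhibit a polynomial-time (Turing) reduction from some NP-complete problem to the task of evaluating $\mdim(G)$.

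First I would show that the MDIM decision problem reduces to the optimization problem with a single oracle call. Given an arbitrary MDIM instance consisting of a connected graph $G$ of order $n$ and an integer $2 \le r \le n$, assume access to a routine returning the exact number $\mdim(G)$. One computes $\mdim(G)$ once and answers the question ``Is $\mdim(G)\le r$?'' affirmatively exactly when the returned value is at most $r$, and negatively otherwise. The only extra work is a single comparison of two integers, hence polynomial in the size of $G$, so the transformation is a valid polynomial-time reduction.

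Then I would invoke Theorem~\ref{theorem:NP-complete}: since MDIM is NP-complete, the existence of a polynomial-time algorithm computing $\mdim(G)$ would, through the reduction above, place MDIM in $\mathrm{P}$ and thus force $\mathrm{P}=\mathrm{NP}$. Consequently no such algorithm can exist unless $\mathrm{P}=\mathrm{NP}$, and the problem of finding the mixed metric dimension of a connected graph is NP-hard.

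The argument presents no genuine obstacle; the only subtlety worth flagging is the passage from a decision problem to an optimization (function) problem, which is why a Turing reduction is used here rather than a many-one reduction, the output of the optimization problem being a number rather than a \texttt{YES}/\texttt{NO} answer.
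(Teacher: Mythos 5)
Your proposal is correct and takes essentially the same route as the paper, which states the corollary as an immediate consequence of Theorem~\ref{theorem:NP-complete} without further elaboration. You merely make explicit the standard one-query Turing reduction (compute $\mdim(G)$, compare with $r$) that the paper leaves implicit, and your remark about needing a Turing rather than many-one reduction for the decision-to-function passage is accurate.
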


\section{Open problems}
We conclude this paper with three open problems. Considering the close relation between the metric dimension, the edge metric dimension and the mixed metric dimension the following two problems arise naturally. 

\begin{problem}
Characterize graphs $G$ for which $\mdim(G) = \dim(G)$.
\end{problem}

\begin{problem}
Characterize graphs $G$ for which $\mdim(G) = \edim(G)$.
\end{problem}

The bound from Theorem \ref{girth} is achieved for several families of graphs therefore the following problem would also be interesting to explore.

\begin{problem}
Characterize graphs $G$ for which the bound from Theorem \ref{girth} is achieved.
\end{problem}

\end{document}